\documentclass[reqno]{amsart}
\usepackage[usenames,dvipsnames]{color}
\usepackage[T1]{fontenc}
\usepackage[utf8]{inputenc}
\usepackage{lmodern}
\usepackage{amsmath,amssymb,mathtools}
\usepackage{microtype}
\usepackage{enumitem}
\usepackage{needspace}
\usepackage{amsthm}
\usepackage{amsmath}
\usepackage{amssymb}
\usepackage{amscd}
\usepackage{graphics}
\usepackage{latexsym}
\usepackage{stmaryrd}
\usepackage{empheq}
\usepackage{xcolor}
\usepackage{tikz}
\usetikzlibrary{arrows.meta}
\usepackage[colorlinks=true,linkcolor=blue!55!black,
 citecolor=orange,urlcolor=red!55!black]{hyperref}
\usepackage{enumitem}
\usepackage[style=trad-abbrv,maxnames=99,maxalphanames=9, isbn=false, giveninits=true, doi=false, url=true]{biblatex}
\bibliography{ref.bib}
\renewbibmacro{in:}{}
\theoremstyle{remark}

\definecolor{titlecol}{named}{BrickRed}
\definecolor{headcol}{named}{Violet}
\definecolor{seccol}{named}{Red}
\definecolor{sseccol}{named}{Bittersweet}
\definecolor{pbcol}{named}{Black}
\definecolor{sncol}{named}{Brown}
\definecolor{acol1}{named}{Red}
\definecolor{acol2}{named}{Apricot}

\def\tr{\text{tr}}

\newcommand{\cab}{c_{\alpha,\beta}}

\def\W{{\mathfrak W}}

\def\W{{\mathfrak W}}

\theoremstyle{plain}
\newtheorem{thm}{Theorem}

\newtheorem{lemma}[thm]{Lemma}

\newtheorem{theorem}[thm]{Theorem}

\newtheorem{definition}[thm]{Definition}
\newtheorem{remark}[thm]{Remark}

\allowdisplaybreaks[1]
\makeatother
\title[Finiteness for the J-equation]{Finiteness of null subvarieties and optimal destabilizers for the \(J\)-equation}
\date{}
\author{Junbang Liu}
\address{Department of Mathematics, The Hong Kong University of Science and Technology, Clear Water Bay, Kowloon, Hong Kong}
\email{junbangliu@ust.hk}
\hypersetup{pdftitle={Finiteness of null subvarieties and optimal destabilizers for the J-equation}}
\begin{document}
\begin{abstract}
    Let $X$ be a compact K\"ahler manifold and $(\alpha,\beta)$ be a pair of K\"ahler classes. We show that when $(\alpha,\beta)$ is $J$-semistable, the collection of positive-dimensional irreducible $J$-null subvarieties is finite. We also obtain a uniform positive lower bound for the normalized $J$-slope of every irreducible non-$J$-null subvariety. This confirms the finiteness and uniform-gap expectation of Khalid–Sjöström Dyrefelt \cite[Section 1.2, Theorem 1.5; Remark 1.8]{KSD26}.  As a corollary, we show that the collection of optimal destabilizers is finite when the stability threshold is nonpositive. 
\end{abstract}
\maketitle
\section{Introduction}

Let $X$ be a compact K\"ahler manifold of dimension $n$ with a pair of K\"ahler classes $(\alpha,\beta)$. Fix a K\"ahler form $\chi\in \beta$, the $J$-equation seeks a K\"ahler form $\omega\in \alpha$ such that \[
n\omega^{n-1}\wedge \chi=\cab\omega^n, \qquad \cab=n\beta\alpha^{n-1}/\alpha^n.
\]
It was introduced in the moment map picture of Donaldson \cite{D99}, and in the study of the Mabuchi energy by Chen \cite{C00}. The solution is the critical point of the corresponding $J$-functional and it is related to the properness of the Mabuchi energy, which leads to a criteria for existence of cscK metrics. Since then, there have been extensive studies of the $J$-equation and the $J$-flow, including convergence, singularity formation, and more general inverse Hessian equations; see \cite{C04,W04,W06,SW08,FLM11,FL12,FL13,SW13,FLSW14,LS15,CS17,S18,SD20,S20,C21,DP21,T23,FM24,GS24,KSD24,KSD26,M26a,M26b} and the references therein. The analytic theory relates the solvability to a strict pointwise cone condition, established by Weinkove \cite{W04,W06} and Song-Weinkove \cite{SW08}. The relations between the $J$-equation and numerical inequalities on intersection numbers was conjectured by Lejmi-Sz\'ekelyhidi \cite{LS15}, proved in the toric case by Collins-Sz\'ekelyhidi \cite{CS17}. G. Chen's breakthrough work \cite{C21} proved the equivalence of uniform numerical intersection inequalities with the pointwise cone condition. Datar-Pingali \cite{DP21} and Song \cite{S20} obtained the full equivalence, in the projective manifold and K\"ahler space respectively. In the normalization above, the numerical intersection inequalities are \[
\int_V\cab\alpha^{p}-p\beta\alpha^{p-1}>0, \qquad \text{ for every subvariety }V\subset X, 0<\dim V<n. 
\]
Replacing these strict inequalities by nonnegative ones leads to numerical \(J\)-nefness condition. It's also called \(J\)-semistability. The subvarieties on which equality holds obstruct smooth solvability and are natural candidates for describing degeneration of the equation. On K\"ahler surfaces, Fang--Lai--Song--Weinkove \cite{FLSW} proved smooth convergence away from finitely many curves of negative self-intersection under a semipositive boundary representative assumption. Murakami \cite{M26a,M26b}  obtained convergence in the sense of currents under the numerical \(J\)-nefness condition. A systematic study of finite numerical tests was undertaken by Khalid--Sj\"ostr\"om Dyrefelt \cite[Theorems 1.10 and 5.3; Proposition 5.8]{KSD24}, who established wall--chamber decompositions on compact K\"ahler surfaces and characterized optimal destabilizing curves for the \(J\)-equation. Their higher-dimensional work \cite[Theorems 1.1, 1.5 and 1.11]{KSD26}, building on Sj\"ostr\"om Dyrefelt's study of stability thresholds \cite[equation (3) and Theorem 5]{SD20}, proves finiteness and wall--chamber results for generalised Monge--Amp\`ere equations under positivity assumptions on the relevant factor classes. The finiteness and uniform-gap questions addressed here originate in this programme, especially \cite[Section 1.2, ; Remark 1.8]{KSD26}, where the uniform gap is predicted and established conditionally; see \cite[Remark 1.8 and Theorem 3.10]{KSD26}. In the \(J\)-nef setting, their questions ask whether there are only finitely many null subvarieties and whether the normalized intersections of all remaining subvarieties are bounded uniformly away from zero.

We now fix the notations. Let  \(X\) be a connected compact K\"ahler manifold,  and $(\alpha,\beta)$ be a pair of K\"ahler classes. Every subvariety is understood to be reduced and irreducible unless stated otherwise. The constant \(c\) in this paper is always the constant $\cab$. We use the following numerical form of the stability threshold introduced and studied by Sjöström
Dyrefelt \cite{SD20} and subsequently used in \cite{KSD24,KSD26,SSD26}.

\begin{definition}\label{def:numerical}For a proper positive-dimensional reduced irreducible analytic subvariety \(V\), put \(p=\dim V\) and
\[
c=\cab=n\beta\alpha^{n-1}/{\alpha^n}>0,
\]
\[
\mu(V,\alpha,\beta)=p\frac{\int_V\beta\alpha^{p-1}}{\int_V\alpha^p},\qquad
J(V,\alpha,\beta)=\int_V(c\alpha^p-p\beta\alpha^{p-1}).
\]
\begin{equation}\label{eq:threshold}
\begin{aligned}
r(V,\alpha,\beta)&=\frac{\cab-\mu(V,\alpha,\beta)}{n-p}
=\frac{J(V,\alpha,\beta)}{(n-p)\int_V\alpha^p},\\
\Gamma&=\inf_{V\subsetneq X,\ 0<\dim V<n}r(V,\alpha,\beta).
\end{aligned}
\end{equation}
The infimum in defining $\Gamma$ is defined to be $+\infty$ when the collection of proper positive-dimensional subvarieties is empty.  \(J\)-nefness means \(J(V,\alpha,\beta)\ge0\) for every such \(V\); a null subvariety has \(J(V,\alpha,\beta)=0\). When $\Gamma\leq 0$, $V$ is called an optimal destabilizer if it satisfies \(r(V,\alpha,\beta)=\Gamma\). 

\end{definition}

Our main result treats the finiteness of the collection of null subvarieties and the uniform gap away from them. Khalid–Sjöström Dyrefelt proved these conclusions under additional positivity assumptions on the relevant factor classes \cite[Theorem 1.5, Remark 1.8 and Theorem 3.10]{KSD26}. Theorem 2 removes these additional assumptions, confirming their expectation.

\begin{samepage}
\begin{theorem}\label{thm:null-finiteness}Assume the pair $(\alpha,\beta)$ is $J$-nef. Then the collection of null subvarieties is finite. Moreover, there is a constant \(\delta>0\), depending on \((X,\alpha,\beta)\), such that every non-null positive-dimensional proper subvariety of dimension $p$ satisfies
\[
J(V,\alpha,\beta)\ge\delta\int_V\beta^p.
\]
Thus the irreducible subvarieties with normalized slope below \(\delta\) are precisely the finitely many null subvarieties.
\end{theorem}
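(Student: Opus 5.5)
The plan is to prove a stronger statement by Noetherian induction. The statement allows an arbitrary constant \(c>0\): we only assume \(\int_V(c\alpha^p-p\beta\alpha^{p-1})\ge0\) for every proper positive-dimensional subvariety \(V\) and \(\int_X(c\alpha^n-n\beta\alpha^{n-1})\ge0\). Normal crossings and singular ambient spaces are handled by passing to resolutions of singularities. The generality in \(c\) is essential. When we restrict the problem to a subvariety \(Y\), the relevant constant remains the constant of \(X\), not \(c_{\alpha|_Y,\beta|_Y}\). So the inductive hypothesis must be phrased with a fixed external \(c\), and this is exactly the setting of the Chen--Song theorem with general constant \cite{C21,S20}.

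\textbf{Step 1 (an analytic exceptional locus).} For \(\varepsilon>0\), the pair with constant \(c+\varepsilon\) satisfies the strict numerical inequalities on every subvariety. This uses \(\int_V\alpha^p>0\) together with the inequality on \(X\) itself. By Song's theorem \cite{S20} (Chen \cite{C21} in the projective case) we then obtain smooth Kähler forms \(\omega_\varepsilon\in\alpha\) satisfying the strict cone condition
\[
(c+\varepsilon)\omega_\varepsilon^{n-1}-(n-1)\chi\wedge\omega_\varepsilon^{n-2}>0 .
\]
Next we let \(\varepsilon\to0\). I would mimic the Demailly--P\u{a}un/Collins--Tosatti mass-concentration argument, as adapted in Song's proof via currents on \(X\times X\) or diagonal blow-ups. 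The goal is a proper analytic subset \(Z\subsetneq X\), depending only on \((X,\alpha,\beta,c)\), and a Kähler current \(T\in\alpha\), smooth on \(X\setminus Z\), such that the cone condition holds with a uniform margin there:
\[
c\,T^{n-1}-(n-1)\chi\wedge T^{n-2}\ \ge\ \delta_0\,\chi^{n-1}\quad\text{on } X\setminus Z .
\]
Here \(Z\) plays the role of a "\(J\)-non-Kähler locus": it is the set where the limiting mass of the \(\omega_\varepsilon\) fails to be bounded below. This is the most technical step. I expect it to be the main obstacle, because the uniformity in \(\delta_0\) must come from strictness of the numerical inequalities away from \(Z\) and not merely in the limit. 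My first attempt would be to take \(Z\) to be the union of the Lelong-number sublevel sets of a weak limit of suitably regularized currents, using Siu's analyticity theorem.

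\textbf{Step 2 (subvarieties not contained in \(Z\)).} Suppose \(V\not\subset Z\) has dimension \(p\). Restrict to \(V_{\rm reg}\setminus Z\) and use the pointwise algebraic lemma that the cone condition implies the corresponding \(p\)-dimensional inequality. This lemma says that \(cT^{p}-p\chi\wedge T^{p-1}\ge \delta_1\chi^p\) on any \(p\)-dimensional subspace; it is the linear algebra used in \cite{C21,S20}. Integrating and using that \(T\) has no mass on \(Z\cap V\) for a suitable choice (a Bedford--Taylor/non-pluripolar product argument) gives
\[
J(V,\alpha,\beta)\ \ge\ \delta_1\int_V\chi^p=\delta_1\int_V\beta^p .
\]
So \(V\) is non-null and satisfies the uniform gap.

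\textbf{Step 3 (induction on \(Z\)).} Let \(Z_1,\dots,Z_k\) be the irreducible components of \(Z\) and \(\pi_i:\tilde Z_i\to Z_i\) resolutions. Every subvariety \(V\subset Z_i\) satisfies the hypothesis of the strengthened statement on \(\tilde Z_i\). This uses the same \(c\), the pulled-back classes \(\pi_i^*\alpha,\pi_i^*\beta\) (nef and big rather than Kähler, handled by perturbing with a small exceptional correction and taking limits), and the strict transform of \(V\). The inequality on \(\tilde Z_i\) itself is just \(J(Z_i)\ge0\).

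By induction on dimension, only finitely many proper subvarieties of each \(Z_i\) are null, and the rest obey a gap \(\delta^{(i)}\). The null subvarieties of \(X\) are therefore among the \(Z_i\) together with the finitely many null subvarieties inside them. This proves finiteness. Finally, take \(\delta=\min(\delta_1,\delta^{(1)},\dots,\delta^{(k)},\min_i J(Z_i)/\!\int_{Z_i}\beta^{\dim Z_i})\), where the last minimum runs over the non-null \(Z_i\). This \(\delta\) gives the uniform bound of Theorem~\ref{thm:null-finiteness}. The final sentence of the theorem then follows immediately.
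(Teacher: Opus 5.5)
Your architecture matches the paper's: a proper analytic exceptional set off which a uniform gap holds, then iteration over its components with the ambient $c$ kept fixed. Your guess that the exceptional set should be a Lelong upper-level set also matches $E_0=\{\nu(R,\cdot)\ge\delta_0\}$. But the steps that carry the content do not go through as written.

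\textbf{Step 1.} This step is only a hope, and the source of the margin is misidentified. The numerical inequalities are not strict, since null subvarieties are exactly what you are trying to locate. Weak limits of your $\omega_\varepsilon$ only satisfy $P_\chi\le c$, with no margin. In the paper the margin comes from a \emph{class drop}:
\begin{itemize}
\item Chen's auxiliary equation is solved on $Y\times Y$ with a right-hand side forced to concentrate near the diagonal, giving a uniform diagonal mass $d_*[\Delta]$.
\item Fiber integration plus eigenvalue truncation subtracts this mass and produces $S\in A-\varepsilon B$ with $P_b(S)\le c$.
\item Only after adding back $\varepsilon K_B$, a K\"ahler current in the big class $B$, does Jensen give $P_b\le c_1<c$ together with a lower bound by a multiple of $\theta$.
\end{itemize}

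\textbf{Step 2.} This step fails for a single singular $T$. If $V$ meets $Z$, both $\langle (T|_V)^p\rangle$ and $\chi\wedge\langle (T|_V)^{p-1}\rangle$ lose mass relative to $\int_V\alpha^p$ and $\int_V\beta\alpha^{p-1}$. Hence $J(V)-\int_{V\setminus Z}(cT^p-p\chi T^{p-1})$ equals $c$ times one nonnegative defect minus $p$ times another, and it has no sign. Saying that $T$ has ``no mass on $Z\cap V$'' does not address this. What is needed are \emph{smooth} forms $\widehat h_{j,K}$ in classes $[Q_j]\to A$ such that:
\begin{itemize}
\item $P_b(\widehat h_{j,K})<c$ on all of $Y$, so the integrand is pointwise nonnegative and integrates to the cohomological value;
\item $\widehat h_{j,K}\ge k\theta$ and $P_b(\widehat h_{j,K})\le c-\eta$ on a prescribed compact $K\subset Y\setminus E_0$, with $k,\eta$ independent of $j,K$.
\end{itemize}
This is what the paper's gluing lemma supplies, via a regularized maximum of local convolutions of $R$ together with the comparison potential $\psi_j+3\delta_0\log\rho$ near $E_0$. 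One then lets $j\to\infty$ and exhausts $Y\setminus E_0$ by such $K$.

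\textbf{Step 3.} The inductive statement as formulated is false where you apply it. On $\tilde Z_i$ with $\pi_i^*\alpha,\pi_i^*\beta$, every subvariety of the exceptional locus with positive-dimensional fibres over $Z_i$ has $J=0$ and $\int(\pi_i^*\beta)^p=0$. So the ``strengthened statement'' would have infinitely many null subvarieties, e.g.\ all lines in an exceptional $\mathbb P^2$.
\begin{itemize}
\item A small exceptional perturbation of the classes need not preserve the nonnegativity hypothesis. Strict transforms of null subvarieties through $\operatorname{Sing}Z_i$ and curves in exceptional fibres pick up first-order terms of opposite sign. Finiteness could not survive the limit anyway.
\item Subvarieties of $Z_i$ contained in $\operatorname{Sing}Z_i$ have no strict transform, so they are not covered at all.
\end{itemize}

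The paper never inducts on resolutions. It proves the gap lemma for each irreducible $Z\subseteq X$ with the ambient data, using the resolution only as a tool: the ambient strict forms $\Omega_t$ are pulled back and perturbed by $u_j\theta$ and $v_j\theta$ at independent rates. It puts $\operatorname{Sing}Z$ into the exceptional set $E_Z=g(E_0)\cup\operatorname{Sing}Z$. It then iterates over the positive-dimensional components of $E_Z$, assigning each $V$ to a minimal-dimensional member containing it of the resulting finite family.
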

\end{samepage}

In particular, the finiteness of the null subvarieties implies that the union of them forms an analytic subset of $X$. The analyticity was proved by Fu \cite{F26} for toric manifolds. 
Our proof closely follows and extends the analytic method in the very recent work of Sivaram--Sj\"ostr\"om Dyrefelt \cite[Section 4]{SSD26}.  When \(\Gamma\le0\), they prove that all optimal destabilizers are contained in a fixed proper analytic subset and that the collections of optimal curves and optimal divisors are finite. At the semistable boundary \(\Gamma=0\), optimal subvarieties are precisely the null subvarieties. Their results therefore give finiteness of null curves and divisors in every dimension. 
In particular, on threefolds, their results give finiteness of all proper positive-dimensional null subvarieties. The threefold case was also proved in \cite{L26a} from a different perspective, where it identifies the numerical null locus of a $J$-nef pair with the analytic non-J-ample locus. Higher-dimensional results were established in \cite{L26b} with technical assumptions of $J$-bigness. Fu--Zhang \cite{FZ} proved a uniform modified-nef estimate, which verifies the numerical characterization of \(J\)-bigness condition in \cite[Proposition 2]{L26b}, and therefore removed that technical assumption. Hence, under the condition of $J$-nefness alone, the remaining question is finiteness for subvarieties of intermediate dimension \(2\le p\le n-2\), when $\dim X\geq 4$. Our theorem confirms this finiteness. 

Beyond the semistable case, one can ask which subvarieties give the strongest numerical obstruction and whether the optimal value is attained. The threshold \(\Gamma\) in \eqref{eq:threshold} arises in Sjöström Dyrefelt's study of the optimal coercivity constant of Donaldson's \(J\)-functional \cite{SD20} and in Khalid–Sjöström Dyrefelt’s study of optimal destabilizing curves in \cite{KD24}. Datar--Mete--Song \cite{DMS} proposed a complementary theory of canonical singular solutions and bubbling governed by minimal slopes, and established it on surfaces and in examples with symmetry. Fu \cite{FuMinimal} proved their characterization of semistability by the minimal birational \(J\)-slope. That invariant uses classes on modifications, whereas \(\Gamma\) is defined by subvarieties of \(X\). The next theorem deals with the finiteness question in this setting.

\begin{theorem}\label{thm:optimal-finiteness}
For a K\"ahler pair $(\alpha,\beta)$ with \(\Gamma\le0\), the proper positive-dimensional optimal destabilizers are nonempty and finite in all dimensions. There is also \(\sigma>0\) such that every nonoptimal positive-dimensional proper subvariety satisfies \(r(V,\alpha,\beta)\ge\Gamma+\sigma\).
\end{theorem}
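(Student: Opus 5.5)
The plan is to reduce Theorem~\ref{thm:optimal-finiteness} to Theorem~\ref{thm:null-finiteness} by shifting $\beta$ by a multiple of $\alpha$. Under this shift the threshold $r$ moves by a constant, so the new pair is $J$-nef and its null subvarieties are exactly the optimal destabilizers of $(\alpha,\beta)$.

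\emph{Step 1: $\Gamma$ is finite.} Since $\Gamma\le 0<+\infty$, the collection of proper positive-dimensional subvarieties is nonempty. For a lower bound, choose $C>0$ with $C\alpha-\beta$ Kähler. Positivity of mixed intersections of Kähler classes on $V$ gives $\int_V\beta\alpha^{p-1}\le C\int_V\alpha^p$, hence $\mu(V,\alpha,\beta)\le pC$. Therefore
\[
r(V,\alpha,\beta)\ge \frac{c-pC}{n-p}\ge \min\bigl(0,\ c-(n-1)C\bigr)
\]
uniformly in $V$, since the ratio is either nonnegative or at least $c-pC\ge c-(n-1)C$. So $-\infty<\Gamma\le 0$.

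\emph{Step 2: the shifted pair.} Set $t=-\Gamma\ge0$ and $\beta'=\beta+t\alpha$, which is again Kähler. By linearity,
\[
c_{\alpha,\beta'}=c+nt,\qquad \mu(V,\alpha,\beta')=\mu(V,\alpha,\beta)+pt .
\]
Hence
\[
r(V,\alpha,\beta')=\frac{c-\mu(V,\alpha,\beta)+(n-p)t}{n-p}=r(V,\alpha,\beta)-\Gamma\ge0 .
\]
Since $J(V,\alpha,\beta')=(n-p)\int_V\alpha^p\, r(V,\alpha,\beta')$, the pair $(\alpha,\beta')$ is $J$-nef. Its null subvarieties are precisely those $V$ with $r(V,\alpha,\beta)=\Gamma$, i.e.\ the optimal destabilizers. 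Theorem~\ref{thm:null-finiteness} applied to $(\alpha,\beta')$ then gives their finiteness at once.

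\emph{Step 3: uniform gap.} Theorem~\ref{thm:null-finiteness} gives $\delta>0$ with $J(V,\alpha,\beta')\ge\delta\int_V\beta'^p$ for every non-null $V$. To convert this into a bound on $r$, compare $\beta'$ with $\alpha$. Pick $\varepsilon>0$ with $\beta-\varepsilon\alpha$ Kähler. Then $\beta'-\varepsilon\alpha=(\beta-\varepsilon\alpha)+t\alpha$ is Kähler, and expanding $\beta'^p=(\varepsilon\alpha+(\beta'-\varepsilon\alpha))^p$ into nonnegative mixed terms gives $\int_V\beta'^p\ge\varepsilon^p\int_V\alpha^p$. Consequently, for every nonoptimal $V$ of dimension $p$,
\[
r(V,\alpha,\beta)-\Gamma=\frac{J(V,\alpha,\beta')}{(n-p)\int_V\alpha^p}\ge\frac{\delta\,\varepsilon^p}{n-p}.
\]
Taking $\sigma=\min_{1\le p\le n-1}\delta\varepsilon^p/(n-p)>0$ gives $r(V,\alpha,\beta)\ge\Gamma+\sigma$.

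\emph{Step 4: nonemptiness.} Suppose no optimal destabilizer exists. Then every proper positive-dimensional subvariety is nonoptimal, so Step 3 gives $r(V,\alpha,\beta)\ge\Gamma+\sigma$ for all of them. Taking the infimum yields $\Gamma\ge\Gamma+\sigma$, which is absurd. Hence the infimum is attained.

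The only genuinely deep input is Theorem~\ref{thm:null-finiteness}. The main points to check are that the shift preserves the Kähler property, which is exactly where $\Gamma\le0$ is used, and the comparison $\int_V\beta'^p\gtrsim\int_V\alpha^p$, which turns the $\beta'$-normalized gap into an $\alpha$-normalized one.
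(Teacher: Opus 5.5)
Your proof is correct, and Steps 1--3 follow the paper's argument. These are the shift \(\beta_0=\beta-\Gamma\alpha\) of Lemma~\ref{lem:shift}, finiteness of \(\Gamma\) by comparing \(\beta\) with a multiple of \(\alpha\), finiteness of optimal destabilizers from Theorem~\ref{thm:null-finiteness}, and the gap obtained by bounding \(\int_V\beta_0^p\) below by a multiple of \(\int_V\alpha^p\). You do this last step cohomologically via the K\"ahler class \(\beta_0-\varepsilon\alpha\), while the paper uses the pointwise bound \(b_0\ge q a_X\); these amount to the same thing.

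The genuine difference is nonemptiness. The paper proves attainment separately in Lemma~\ref{lem:attainment}. If the shifted pair had no null subvariety, Song's strict numerical criterion would give a form \(a_0\in\alpha\) with \(P_{b_0}(a_0)<c_0\). Its compactness margin would bound every shifted \(r\) below by a positive constant, contradicting \(\inf r=0\). You instead use that the uniform gap of Theorem~\ref{thm:null-finiteness} is unconditional: it holds for all non-null subvarieties whether or not any null one exists. So if no optimizer existed, every \(V\) would satisfy \(r(V,\alpha,\beta)\ge\Gamma+\sigma\), contradicting the definition of \(\Gamma\). This uses, as you note, that the family of subvarieties is nonempty because \(\Gamma<+\infty\).

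Both arguments have the same shape; they differ only in where the uniform lower bound comes from. Your version is shorter and makes attainment a formal corollary of the gap. However, it relies on the full analytic machinery behind Theorem~\ref{thm:null-finiteness}, which itself invokes Song's theorem in Lemma~\ref{lem:ambient-approximation}, so no input is actually saved. The paper's version needs only Song's criterion. It therefore stands independently of the finiteness theorem and recovers Sj\"ostr\"om Dyrefelt's conditional attainment result on its own.
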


The restriction \(\Gamma\le0\) is essential. For the hyperplane class on \(\mathbb P^n\), the pair \(\alpha=\beta\) has \(\Gamma=1\), and every proper positive-dimensional subvariety is a minimizer.

\subsection{Outline of the proof}
The proof follows the induction strategy of Khalid--Sj\"ostr\"om Dyrefelt \cite[proofs of Theorems 3.1 and 3.9]{KSD26} and the analytic construction of Sivaram--Sj\"ostr\"om Dyrefelt \cite{SSD26}. 
The key step is Lemma~\ref{lem:local-gap}. For each irreducible \(Z\subseteq X\) of dimension at least two, it provides a proper analytic subset \(E_Z\subsetneq Z\) and positive constants \(\delta_{Z,p}\) such that
\[
J(V,\alpha,\beta)\ge\delta_{Z,p}\int_V\beta^p
\]
whenever \(V\subset Z\) has dimension \(p<\dim Z\) and is not contained in \(E_Z\). 

To prove this estimate, we resolve \(Z\) and construct strict approximating forms by perturbing the two pullback classes at independently chosen rates. Chen's auxiliary equation \cite[Theorem 1.14]{C21} and the diagonal mass concentration method of Demailly--P\u{a}un \cite{DP04} then produce a positive current in a class smaller than the first pullback class. Eigenvalue truncation allows the diagonal contribution to be subtracted while retaining the required cone inequality. A further perturbation gives a current with a fixed positive lower bound and a uniform strict cone margin.

The local potentials of this current can be regularized and glued outside a fixed positive Lelong level set. The regularization radius may depend on the approximating form and on the compact set where a uniform estimate is required; the lower bound and cone margin do not. This is mainly inspired by the method in \cite{SSD26}, which is carried out on a fixed smooth K\"ahler manifold. Here we need to handle singular K\"ahler space.  Integration over a fixed strict transform, followed by passage to the cohomological limit and compact exhaustion, gives Lemma~\ref{lem:local-gap}. Iterating over the finitely many irreducible components of each \(E_Z\) proves Theorem~\ref{thm:null-finiteness}. Finally, the class-shift reduction of \cite[Lemma 14]{SD20}, \cite[proof of Theorem 3.10 and Remark 3.11]{KSD26}, and \cite[Section 2.2.1]{SSD26}, given by \(\beta_0=\beta-\Gamma\alpha\), reduces Theorem~\ref{thm:optimal-finiteness} to the \(J\)-nef case.

\textbf{Acknowledgements}. The research is partially supported by the Hong Kong General Research Fund \#16305625 of the Hong Kong Research Grants Council.

\textbf{Declaration on the use of AI}. All the ideas in the proof are the author's own, and ChatGPT is used to assist in the preparation of the
manuscript.  The author takes full responsibility for the paper’s content and correctness.

\section{Preliminaries}

Products of differential forms will be written without wedge signs when no ambiguity is possible. We use the same notation for the intersection products of their classes. For a positive current, say $T$, its absolutely continuous part is denoted by the subscript \(\mathrm{ac}\), i.e.  $T_{\mathrm ac}$; characteristic functions acting on its coefficient measures are denoted by \(\mathbf 1_E\).

\subsection{The cone function}
\begin{lemma}\label{lem:matrix-cone}
On an \(m\)-dimensional compact K\"ahler manifold, \(m\ge2\), for two closed real $(1,1)$-forms, \(h>0\) and \(b\ge0\), write
\[
P_b(h)=\max_{\dim_{\mathbb C}L=m-1}
\operatorname{tr}_{h|_L}(b|_L).
\]
The cone function $P_b(h)$ is convex and order-reversing in $h$ and monotone in $b$. If \(P_b(h)\le c-\eta\), then \(ch^p-pbh^{p-1}\ge\eta h^p\) for every \(1\le p<m\). 
\end{lemma}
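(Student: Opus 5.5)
Everything here is pointwise linear algebra, so I will work at a point $x$ and diagonalize $b$ with respect to $h$: in $h$-unitary coordinates $h=\sum_i\sqrt{-1}\,dz_i\wedge d\bar z_i$ and $b=\sum_i\lambda_i\sqrt{-1}\,dz_i\wedge d\bar z_i$, with $0\le\lambda_1\le\dots\le\lambda_m$.

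\emph{Step 1: a formula for $P_b(h)$.} For a hyperplane $L$, the restricted eigenvalues of $b|_L$ interlace with the $\lambda_i$ (Cauchy interlacing). Hence $\operatorname{tr}_{h|_L}(b|_L)\le\lambda_2+\dots+\lambda_m$, and equality holds for $L=\{z_1=0\}$. This gives
\[
P_b(h)=\operatorname{tr}_h b-\lambda_{\min}(b,h).
\]

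\emph{Step 2: monotonicity and convexity.} Monotonicity in $b$ is immediate, since $b\le b'$ gives $\operatorname{tr}_{h|_L}b|_L\le\operatorname{tr}_{h|_L}b'|_L$ for each $L$, and then we take the max. For $h$, it suffices to show that for fixed $L$ the map $h\mapsto\operatorname{tr}_{h|_L}(b|_L)$ is order-reversing and convex, because a max of such functions inherits both properties. On $L$ we have $\operatorname{tr}_{h}b=\operatorname{tr}(H^{-1}B)$ with $B\ge0$. Write $B=\sum v_kv_k^*$. Then $\operatorname{tr}(H^{-1}B)=\sum_k v_k^*H^{-1}v_k$. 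Matrix inversion is operator-monotone decreasing and operator-convex on positive definite matrices, so each term is order-reversing and convex in $H$.

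\emph{Step 3: the cone inequality.} Fix $1\le p<m$. For a $p$-dimensional subspace $W$, the restriction of $ch^p-pbh^{p-1}$ to $W$ equals $(c-\operatorname{tr}_{h|_W}b|_W)\,h^p|_W$. I interpret the inequality $ch^p-pbh^{p-1}\ge\eta h^p$ in this sense, i.e. on every $p$-plane, which is exactly the form needed for integration over $p$-dimensional subvarieties (via restriction to the regular part). It therefore suffices to show
\[
\operatorname{tr}_{h|_W}b|_W\le P_b(h)\qquad\text{for }\dim W=p\le m-1.
\]
Repeated interlacing shows that the restricted eigenvalues $\mu_1\le\dots\le\mu_p$ of $b|_W$ satisfy $\mu_j\le\lambda_{m-p+j}$. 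Summing and using $\lambda_i\ge0$ gives $\operatorname{tr}_{h|_W}b|_W\le\sum_{i>m-p}\lambda_i\le\sum_{i\ge2}\lambda_i=P_b(h)$. Alternatively, $W$ is contained in some hyperplane $L$, and the trace can only grow when we pass from $W$ to $L$ because $b\ge0$. Then $P_b(h)\le c-\eta$ gives $(c-\operatorname{tr}_{h|_W}b|_W)\ge\eta$.

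If a genuinely form-level positivity statement is wanted, the eigenbasis computation shows that $ch^p-pbh^{p-1}-\eta h^p$ has coefficients $c-\eta-\sum_{i\in I}\lambda_i\ge0$ on the basis $(p,p)$-forms indexed by $|I|=p$. This makes it a nonnegative combination of the positive forms $\bigwedge_{i\in I}\sqrt{-1}\,dz_i\wedge d\bar z_i$, hence strongly positive.

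The only real subtlety is Step 2's convexity. It rests on the operator convexity of $H\mapsto H^{-1}$ and on the fact that restriction to $L$ is linear in $h$; I expect it to be routine once written this way.
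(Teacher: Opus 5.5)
Your proof is correct: the hyperplane-extension and interlacing argument in Step~3 (including the strongly positive form-level version, up to the harmless factor $p!$) and the per-hyperplane operator-convexity argument for $H\mapsto H^{-1}$ in Step~2 are both sound. The paper states this lemma without proof. The only justification it gives for the cone inequality, in the proofs of Lemma~\ref{lem:ambient-approximation} and Lemma~\ref{lem:local-gap}, is exactly your alternative route---extend a $p$-plane to an $(m-1)$-plane and discard the omitted nonnegative trace---so your Step~2 just supplies the convexity and order-reversal that the paper treats as standard.
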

Note that here we don't require $b$ to be K\"ahler but only semipositive.

\subsection{The strict numerical criterion}

\begin{theorem}[Theorem 1.1 in \cite{S20}]\label{thm:song}
On a compact K\"ahler \(d\)-fold with K\"ahler classes \(C,B\), if
   \[
C^d-dC^{d-1}B\ge0,\qquad
   \int_V(C^q-qC^{q-1}B)>0\quad(1\le q<d),
\]
   then for every K\"ahler form \(b\in B\) there is a K\"ahler form \(h\in C\) with
   \[
h^{d-1}-(d-1)h^{d-2}b>0.
\]
\end{theorem}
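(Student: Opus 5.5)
Since Theorem~\ref{thm:song} is quoted from \cite{S20}, we only outline how one would prove it. The argument is a continuity method in the class \(B\) combined with an induction on dimension. Fix a K\"ahler form \(b\in B\) and consider the family \(tb\), \(t\in[0,1]\). Let \(I\) be the set of \(t\) for which some K\"ahler \(h\in C\) satisfies \(h^{d-1}-(d-1)th^{d-2}b>0\).

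\textbf{Reduction to strict slack.} The hypotheses are linear in \(B\). Hence for \(t<1\) both the top-degree inequality and every subvariety inequality hold strictly, with a margin proportional to \(1-t\). The subvariety inequalities also pass to all subvarieties of every subvariety \(V\), which is what the induction needs.

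\textbf{Openness and nonemptiness.} Clearly \(0\in I\). Openness follows because the cone condition \(h^{d-1}-(d-1)th^{d-2}b>0\) is an open condition jointly in \(h\) and \(t\).

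\textbf{Closedness (main step).} Suppose \(t_k\uparrow t_\infty\le1\) with \(t_k\in I\). We must produce a cone form for \(t_\infty\) itself, at least when \(t_\infty<1\). To do this I would invoke G.~Chen's result \cite[Theorem 1.14]{C21}: uniform strict numerical positivity implies the cone condition. One verifies Chen's hypothesis by arguing by contradiction. If the uniform bound fails, there is a sequence of subvarieties \(V_k\) whose normalized quantities \(\int_{V_k}(C^q-qtC^{q-1}B)\big/\int_{V_k}C^q\) tend to zero. For each fixed dimension \(q\), one applies the induction hypothesis on the lower-dimensional subvarieties, working on resolutions of \(V_k\) where necessary. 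Then the Demailly--P\u{a}un diagonal mass-concentration argument \cite{DP04} on \(X\times X\) produces a K\"ahler current in \(C\) satisfying a weak cone inequality. Its positive Lelong level sets are proper analytic subsets, and these carry the defect. Regularizing off these sets and gluing the local potentials yields a smooth cone form. Finally, the top-degree inequality \(C^d-dC^{d-1}B\ge0\) is exactly the constraint that the constant in the \((d-1)\)-form equation must satisfy, so only a nonstrict inequality is needed there.

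\textbf{Endpoint \(t=1\).} To pass from \(t<1\) to \(t=1\), I would use the slack in the subvariety inequalities together with compactness of the relevant cycle spaces. This should give a uniform margin, so that Chen's theorem applies directly at \(t=1\).

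\textbf{Expected obstacle.} The main difficulty is the mass-concentration and gluing step. Subtracting the diagonal contribution while preserving the cone inequality requires an eigenvalue truncation. The gluing must control the cone margin uniformly near the Lelong sets, and this has to be done on singular \(V\) through resolutions.
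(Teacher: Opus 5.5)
The paper gives no proof of this statement; it is quoted from \cite{S20}. So your outline has to stand on its own as a reconstruction of Song's theorem, and it has a genuine gap exactly where the theorem has its content.

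First, your continuity path does no work for $t<1$. Choose a K\"ahler $h\in C$ and $\kappa>0$ with $b\ge\kappa h$. Linearity gives $\int_V(C^q-qtC^{q-1}B)\ge q(1-t)\kappa\int_V C^q$ for every $V$, and the top term becomes strictly positive. So Chen's \emph{uniform} numerical criterion (his main theorem, not the auxiliary-equation result \cite[Theorem 1.14]{C21} used in the paper) applies directly for every $t<1$. The contradiction argument in your closedness step is never needed.

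Everything therefore rests on the endpoint $t=1$, where you propose to upgrade strict to uniform positivity by compactness of cycle spaces. That step fails.
\begin{itemize}
\item Compactness (Bishop, Barlet) controls only families of bounded volume. A sequence $V_k$ with $\int_{V_k}(C^q-qC^{q-1}B)/\int_{V_k}C^q\to0$ can have $\int_{V_k}C^q\to\infty$. Then, after extraction, $[V_k]/\int_{V_k}C^q$ converges only to a closed positive current, on which no strict inequality is known.
\item Such an argument never uses the top-degree hypothesis, and without it the upgrade is false. Let $L$ be Mumford's line bundle on a ruled surface, with $L\cdot V>0$ for every curve $V$ and $L^2=0$. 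Let $H$ be ample and put $C=L+H$, $B=H$. Every curve inequality $\int_V(C-B)=L\cdot V>0$ holds. But $L\cdot V\ge\varepsilon\,C\cdot V$ for all curves would make $(1-\varepsilon)L-\varepsilon H$ nef, while its square $-2\varepsilon(1-\varepsilon)L\cdot H+\varepsilon^2H^2$ is negative for small $\varepsilon>0$. Only the top-degree hypothesis fails here, since $C^2-2C\cdot B=-H^2<0$.
\end{itemize}

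Under the full hypotheses the uniform margin does hold, but only a posteriori: a strict cone form on compact $X$ has a pointwise margin, which integrates to a uniform bound. So the missing step is the theorem itself. It has to be proved by running the machinery you list at $t=1$, as a direct construction rather than a contradiction argument over a sequence $V_k$:
\begin{itemize}
\item Chen's auxiliary equation on $X\times X$, where $C^d-dC^{d-1}B\ge0$ enters as the cohomological compatibility of the right-hand side;
\item diagonal mass concentration and eigenvalue truncation, giving a current in $C$ with a strict cone margin off a fixed Lelong level set $E$;
\item a subsolution near $E$, obtained from the statement on $E$ by induction;
\item gluing of the regularized local potentials.
\end{itemize}
For that induction you also cannot simply apply the statement on resolutions of subvarieties, because the pulled-back classes are only nef and big, not K\"ahler. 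This is why Song proves the criterion on singular K\"ahler spaces. It is also why the paper perturbs the pulled-back classes in Lemma~\ref{lem:resolution-approximation} before working on its resolutions.
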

This is the smooth case of Song's results \cite[Theorem 1.1]{S20}. Song's theorem also applies to a compact analytic subvariety with ambient K\"ahler classes. Only the smooth case is used here.

\subsection{Auxiliary equations}\label{sec:analytic-inputs}

\begin{theorem}[Theorem 1.14 in \cite{Chen21}]\label{thm-Chen}
    On a compact K\"ahler \(N\)-fold, for K\"ahler forms \(\chi,\omega_0\), a constant \(D>0\), and a smooth function
\[
F>-\frac1{2ND^{N-1}},\qquad
\int F\chi^N=D[\omega_0]^N-N[\chi][\omega_0]^{N-1}\ge0,
\]
if \(P_\chi(\omega_0)<D\), then there is a K\"ahler form \(\omega_\varphi\in[\omega_0]\) satisfying
\[
\operatorname{tr}_{\omega_\varphi}\chi+
F\chi^N/\omega_\varphi^N=D,\qquad P_\chi(\omega_\varphi)<D.
\]
\end{theorem}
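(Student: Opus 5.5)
This result is quoted from \cite[Theorem 1.14]{Chen21}, and in the paper it is used as a black box. If I were to prove it, I would run a continuity method in the right-hand side \(F\), keeping the class \([\omega_0]\) and the form \(\chi\) fixed. Write the equation as
\[
G(\omega_\varphi,F):=\operatorname{tr}_{\omega_\varphi}\chi+F\,\chi^N/\omega_\varphi^N=D .
\]
Put \(F_0=(D-\operatorname{tr}_{\omega_0}\chi)\,\omega_0^N/\chi^N\), so that \(\omega_0\) itself solves the equation with datum \(F_0\). By construction \(\int F_0\chi^N=D[\omega_0]^N-N[\chi][\omega_0]^{N-1}=\int F\chi^N\). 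I then consider the path \(F_t=(1-t)F_0+tF\), which keeps the cohomological normalization for all \(t\in[0,1]\). The lower bound \(F_t>-1/(2ND^{N-1})\) holds for free when \(F_0\ge F\) pointwise, but not in general. The cone hypothesis \(P_\chi(\omega_0)<D\) controls only \((N-1)\)-dimensional traces, so \(F_0\) may be very negative. If that happens, I would first move \(\omega_0\) inside the open set \(\{P_\chi<D\}\) to a form whose full trace is at most \(D\); such a form exists after a preliminary \(J\)-flow type step. Then I would run the path from there. The set \(T\subset[0,1]\) of parameters with a solution satisfying \(P_\chi(\omega_{\varphi_t})<D\) contains \(0\), and I need to show it is open and closed.

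Openness comes from the implicit function theorem. The linearization of \(G\) at \(\omega_\varphi\) is
\[
\psi\mapsto -\omega_\varphi^{i\bar q}\omega_\varphi^{p\bar j}\chi_{p\bar q}\psi_{i\bar j}-F\frac{\chi^N}{\omega_\varphi^N}\Delta_{\omega_\varphi}\psi .
\]
Using \(P_\chi(\omega_\varphi)<D\), each eigenvalue \(\lambda_i\) of \(\chi\) relative to \(\omega_\varphi\) satisfies \(\lambda_i<D\). So \(\chi^N/\omega_\varphi^N\le D^N\), and together with \(\lambda_i\ge\min\lambda\) this gives
\[
|F|\chi^N/\omega_\varphi^N\le \tfrac1{2N}D\min_i\lambda_i \quad\text{when } F<0 .
\]
Hence the linearized operator is uniformly elliptic with coefficients bounded below by roughly \(\tfrac12\lambda_i^2\). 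On mean-zero functions it is invertible by Fredholm theory and the maximum principle. The cone condition is open, so it persists for nearby \(t\).

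Closedness needs a priori estimates that are uniform in \(t\), and this is the main obstacle. First I would preserve the cone condition \(P_\chi(\omega_{\varphi_t})<D\) by a maximum principle argument on the largest \((N-1)\)-partial trace, in the spirit of Song--Weinkove. Next comes the \(C^0\) bound. Here I would use the fact that \(\omega_0\), satisfying \(P_\chi(\omega_0)<D\), is a \(\mathcal C\)-subsolution in Székelyhidi's sense for this operator. This is where the constant \(1/(2ND^{N-1})\) enters, ensuring the level sets of the operator stay in the right cone. An Alexandrov–Błocki type argument then bounds \(\operatorname{osc}\varphi\). For the Laplacian bound I would apply the maximum principle to \(\log\lambda_{\max}-A\varphi\), using the subsolution to absorb the bad terms. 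Finally, concavity of the operator in \(\omega_\varphi^{-1}\), again guaranteed by the lower bound on \(F\), gives Evans--Krylov \(C^{2,\alpha}\) estimates, and Schauder theory bootstraps to smoothness. I expect the \(C^0\) and subsolution step, together with the possibly negative \(F_0\) at the start of the path, to be the delicate points.
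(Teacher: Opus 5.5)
Your outline has a genuine gap at the starting point of the continuity path, and it is exactly the step you defer. As you observe, \(P_\chi(\omega_0)<D\) bounds only \((N-1)\)-partial traces. So \(\operatorname{tr}_{\omega_0}\chi\) can approach \(\frac{N}{N-1}D\), and \(F_0=(D-\operatorname{tr}_{\omega_0}\chi)\,\omega_0^N/\chi^N\) can lie far below \(-1/(2ND^{N-1})\). For example, at a point where all eigenvalues of \(\chi\) relative to \(\omega_0\) are close to \(D/(N-1)\), one gets \(F_0\approx-((N-1)/D)^{N-1}\). Along \(F_t=(1-t)F_0+tF\) you then lose the lower bound that gives ellipticity, concavity and preservation of the cone, so closedness is no longer controlled.

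Your remedy is to first reach some \(\omega_1\in[\omega_0]\) with \(P_\chi(\omega_1)<D\) and \(\operatorname{tr}_{\omega_1}\chi\le D\) by a \emph{J-flow type step}. This is not a preliminary step: it is essentially the theorem itself for a constant right-hand side \(F\ge0\). Put \(c:=N[\chi][\omega_0]^{N-1}/[\omega_0]^N\le D\).
\begin{itemize}
\item If \(D=c\), the identity \(\int(D-\operatorname{tr}_{\omega_1}\chi)\,\omega_1^N=D[\omega_0]^N-N[\chi][\omega_0]^{N-1}=0\) forces \(\operatorname{tr}_{\omega_1}\chi\equiv c\). So \(\omega_1\) is a solution of the \(J\)-equation, i.e.\ Song--Weinkove's theorem.
\item If \(D>c\), the \(J\)-flow gives nothing. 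Its convergence needs the cone condition at the constant \(c\), you only have it at the larger constant \(D\), and the \(J\)-equation in \([\omega_0]\) may well be unsolvable.
\end{itemize}

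The route the paper summarizes avoids this. Its first continuity path has \(\omega_0\) as an exact solution at time zero and a nonnegative \emph{constant} coefficient; only then is that constant deformed to \(F\). Concretely:
\begin{itemize}
\item Take \(0<\kappa\le D/N\) and \(\chi_t=(1-t)\kappa\omega_0+t\chi\). Solve \(\operatorname{tr}_\omega\chi_t+f_t\chi_t^N/\omega^N=D\) with \(f_t=(D[\omega_0]^N-N[\chi_t][\omega_0]^{N-1})/[\chi_t]^N\).
\item At \(t=0\), \(\omega_0\) solves this with \(f_0=(D-N\kappa)\kappa^{-N}\ge0\).
\item The numerator of \(f_t\) is affine in \(t\) and nonnegative at both ends, so \(f_t\ge0\) throughout.
\item Since \(P\) is sublinear in \(\chi\), \(P_{\chi_t}(\omega_0)\le(1-t)(N-1)\kappa+tP_\chi(\omega_0)<D\). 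So \(\omega_0\) stays a subsolution along the whole path.
\item Second path: deform the constant \(f_1\ge0\) linearly to \(F\). The bound \((1-t)f_1+tF>-1/(2ND^{N-1})\) holds because it holds at both endpoints.
\end{itemize}
With this starting point, the rest of your outline (subsolution estimates, concavity from the lower bound on \(F\)) is in the spirit of the paper's summary.

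Two smaller remarks. First, preserving the cone condition along a path of solutions needs no maximum principle. If \(P_\chi(\omega)\le D\), then every \(\lambda_k\le D\), hence \(1+F\prod_{k\ne i}\lambda_k>1-\tfrac1{2N}>0\). The equation then gives \(\sum_{k\ne i}\lambda_k=D-\lambda_i\bigl(1+F\prod_{k\ne i}\lambda_k\bigr)<D\), so the cone condition is closed as well as open along the path. Second, your ellipticity bound should read \(|F|\chi^N/\omega_\varphi^N\le\lambda_{\min}/(2N)\), using \(\prod_{k\ne\min}\lambda_k<D^{N-1}\), without the extra factor \(D\).
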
 
This is \cite[Theorem 1.14]{C21}. Chen proves the auxiliary equation by two continuity paths: first deform \(\chi\) from a multiple of the initial metric with a nonnegative constant right-hand term, then deform that term to \(F\). The initial cone supplies the subsolution, and the explicit negative lower bound on \(F\) preserves ellipticity and the estimates \cite[Section 2]{C21}. We apply this theorem only to smooth K\"ahler data satisfying the stated integral identity. The estimates needed for the degenerate limit will be obtained below.

\subsection{Mass concentration}\label{sec:mass-concentration} For a compact K\"ahler \(N\)-fold \((M,\Theta)\) and a codimension-\(p\) analytic subvariety \(D\), there are a family of K\"ahler forms, parametrized by $\eta$,  \(\Theta_\eta\in[\Theta]\), \(\Theta_\eta\ge\Theta/2\), and tubular neighborhoods \(U_\eta\downarrow D\) as $\eta\to 0$, such that, for each suitable open \(U\) meeting the regular part of \(D\),
\[
\int_{U\cap U_\eta}\Theta_\eta^p\Theta^{N-p}\ge b_U>0.
\]
They arise from \(\psi_\eta=\frac12\log(\sum\rho_i^2|g_i|^2+\eta^2)\), with uniformly bounded negative Hessian, by adding a fixed small multiple of \(dd^c\psi_\eta\) to \(\Theta\). See \cite[Lemma 2.1(iii)--(iv)]{DP}; the associated mass concentration argument is developed in \cite[Proposition 2.6]{DP}.

\section{Approximation and concentration on a resolution}

\begin{samepage}
\begin{lemma}\label{lem:ambient-approximation}
For a \(J\)-nef pair $(\alpha,\beta)$ and every \(t>1\), there is a K\"ahler form \(\Omega_t\in t\alpha\) such that, for the fixed K\"ahler form \(b\in\beta\),
\[
c\Omega_t^q-q\Omega_t^{q-1}b>0\qquad(1\le q\le n-1).
\]

\end{lemma}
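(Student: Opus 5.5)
The plan is to deduce the lemma from Song's strict numerical criterion, Theorem~\ref{thm:song}, applied on \(X\) itself (\(d=n\)) to a rescaled class. Then I convert the resulting pointwise cone condition into the stated inequalities using Lemma~\ref{lem:matrix-cone}. If \(n=1\) the range \(1\le q\le n-1\) is empty and there is nothing to prove, so assume \(n\ge2\).

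\emph{Choice of classes.} Fix \(t>1\) and put \(C=ct\alpha\) and \(B=\beta\); both are K\"ahler since \(c>0\). I check the two hypotheses of Theorem~\ref{thm:song}.

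For the top-degree condition, using \(n\beta\alpha^{n-1}=c\alpha^n\),
\[
C^n-nC^{n-1}B=(ct)^{n-1}\bigl(ct\,\alpha^n-n\beta\alpha^{n-1}\bigr)=(ct)^{n-1}c(t-1)\alpha^n>0 .
\]
For a subvariety \(V\) of dimension \(1\le q<n\),
\[
\int_V\bigl(C^q-qC^{q-1}B\bigr)=(ct)^{q-1}\Bigl(J(V,\alpha,\beta)+c(t-1)\int_V\alpha^q\Bigr).
\]
This is strictly positive because \(J\)-nefness gives \(J(V,\alpha,\beta)\ge0\), and \(t>1\), \(\int_V\alpha^q>0\). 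This is exactly where the hypothesis \(t>1\) absorbs the possible null subvarieties.

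\emph{Applying Song's theorem.} Theorem~\ref{thm:song} gives, for the fixed \(b\in\beta\), a K\"ahler form \(h\in ct\alpha\) with \(h^{n-1}-(n-1)h^{n-2}b>0\). I use the standard pointwise linear-algebra fact that this positivity of an \((n-1,n-1)\)-form is equivalent to \(\operatorname{tr}_{h|_L}(b|_L)<1\) for every complex hyperplane \(L\). To see it, diagonalize \(b\) with respect to \(h\): the form is positive iff \(\sum_{j\ne i}\lambda_j<1\) for each \(i\), and the maximum over hyperplanes of the restricted trace is \(\max_i\sum_{j\ne i}\lambda_j\). Hence \(P_b(h)<1\) pointwise.

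Now set \(\Omega_t=h/c\in t\alpha\). Since \(P_b\) is homogeneous of degree \(-1\) in the first argument, \(P_b(\Omega_t)=cP_b(h)<c\). By compactness of \(X\) and continuity there is \(\eta>0\) with \(P_b(\Omega_t)\le c-\eta\). Lemma~\ref{lem:matrix-cone} with \(m=n\) then yields
\[
c\Omega_t^q-qb\,\Omega_t^{q-1}\ge\eta\,\Omega_t^q>0,\qquad 1\le q\le n-1.
\]

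\emph{Where care is needed.} The main point needing attention is the translation between Song's output and the cone function. This requires the diagonalization identity above and the scaling \(h\mapsto h/c\), and in particular confirming that the normalization in Theorem~\ref{thm:song} (trace \(<1\)) matches \(P_b(\Omega_t)<c\). Everything else is the direct intersection-number computation already carried out.
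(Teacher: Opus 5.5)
Your proposal is correct and follows the paper's proof. Both apply Song's criterion to \(C=ct\alpha\), \(B=\beta\) with the same intersection computations, rescale \(\Omega_t=h/c\), and pass from hyperplanes to lower-dimensional planes. The only cosmetic difference is that you route the last step through Lemma~\ref{lem:matrix-cone}, adding a harmless but unnecessary uniform \(\eta\); the paper argues it directly by extending each \(q\)-plane to an \((n-1)\)-plane and using nonnegativity of the omitted trace.
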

\end{samepage}

\begin{proof}
Apply Song's theorem~\ref{thm:song} to \(C=ct\alpha\) and \(B=\beta\). For each proper \(q\)-fold \(V\),
\[
\int_V((ct\alpha)^q-q(ct\alpha)^{q-1}\beta)
=c^{q-1}t^{q-1}\left(J(V,\alpha,\beta)+c(t-1)\int_V\alpha^q\right)>0.
\]
The top intersection equals \(c^nt^{n-1}(t-1)\alpha^n>0\). Thus a form \(h_t\in ct\alpha\) satisfies the strict \((n-1)\)-cone inequality. Set \(\Omega_t=h_t/c\). In a unitary frame, the cone inequality says that the sum of the eigenvalues of \(b\) relative to \(\Omega_t\) over any \((n-1)\)-plane is smaller than \(c\). The same holds for every smaller plane: extend it to an \((n-1)\)-plane and use positivity of the omitted trace. This proves all lower-degree strong inequalities.
\end{proof}

\begin{samepage}
Assume  \(J\)-nefness of $(\alpha,\beta)$. We fix two K\"ahler form $a_X\in \alpha, b_X\in \beta$. Let \(g:Y\to Z\subseteq X\) be a resolution of an irreducible analytic subvariety of dimension \(m\ge2\). Here we allow \(Z=X\) and \(g=\mathrm{id}\). Put \(a=g^*a_X\), \(b=g^*b_X\), \(A=[a]\), \(B=[b]\), and fix a K\"ahler form \(\theta\) on \(Y\). Then \(A^m,B^m>0\), the forms \(a,b\) are semipositive and positive off a proper analytic set.
\begin{lemma}\label{lem:resolution-approximation}
 There are
\[
A_j=t_jA+u_j[\theta],\qquad B_j=B+v_j[\theta],\qquad
t_j\downarrow1,\quad u_j,v_j\downarrow0,
\]
with K\"ahler representatives \(h_j\in A_j\), \(b_j=b+v_j\theta\) satisfying
\begin{equation}\label{eq:resolved-cone}
P_{b_j}(h_j)<c,\qquad cA_j^m-mB_jA_j^{m-1}>0.
\end{equation}
\end{lemma}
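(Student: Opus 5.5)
Here is the plan. I pull back the ambient approximations of Lemma~\ref{lem:ambient-approximation} to $Y$ and repair the loss of strict positivity by adding small multiples of $\theta$ to both classes, with $u_j$ chosen after $v_j$.

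\textbf{Step 1: pull back the ambient form.} Fix $t_j\downarrow1$ and let $\Omega_{t_j}\in t_j\alpha$ be the form from Lemma~\ref{lem:ambient-approximation}, taken with respect to $b_X$. By the plane-trace reformulation in its proof, $P_{b_X|_{T}}(\Omega_{t_j}|_T)<c$ holds on every $(m-1)$-plane inside every $m$-dimensional subspace $T\subset T_xX$. Compactness of the Grassmannian bundle then gives a uniform margin:
\[
\operatorname{tr}_{\Omega_{t_j}|_L} b_X|_L \le c-2\eta_j
\]
for every $L$ of dimension $\le n-1$.

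On $Y$, set $\omega_j=g^*\Omega_{t_j}$. This form is semipositive, and degenerate along the exceptional locus. The key point is that $b=g^*b_X$ is dominated by $\omega_j$ in a scale-invariant way. Both are pullbacks, so on any $(m-1)$-plane $L\subset T_yY$ we have $g_*L=:L'$, of dimension $\le m-1$. The pair $(g^*\Omega|_L,g^*b|_L)$ is the pullback of $(\Omega|_{L'},b_X|_{L'})$ under a linear surjection. Hence, after adding $\epsilon$ of a positive form, the generalized eigenvalues of $b$ against $\omega_j$ on $L$ are controlled by those on $L'$, with the kernel directions contributing zero to the numerator.

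Concretely, I claim that for $\epsilon>0$,
\[
\operatorname{tr}_{(\omega_j+\epsilon\theta)|_L}(b|_L)\le c-2\eta_j.
\]
The reason is that $b\le \lambda\,\omega_j$ in the matrix sense whenever $b_X\le\lambda\Omega$ on the image. Since the trace bound is a sum over eigenvalues, I will argue via the monotonicity in Lemma~\ref{lem:matrix-cone}: $P_b$ is order-reversing in $h$, and $\omega_j+\epsilon\theta\ge\omega_j$ on $L$. The degenerate case is handled by taking a limit over the nondegenerate directions. Verifying this degenerate linear-algebra statement carefully, namely that the trace relative to $\omega_j+\epsilon\theta$ on $L$ is at most the trace on the image plane, is the main technical point. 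I would prove it by splitting $L=(L\cap\ker dg)\oplus L''$ and using that $b$ vanishes on the kernel.

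\textbf{Step 2: add $v_j\theta$ to $b$.} Choose $v_j\downarrow0$ small, and set $u_j=\epsilon_j$ with $h_j=\omega_j+u_j\theta\in A_j$. Adding $v_j\theta$ to $b$ raises the trace by at most $v_j\operatorname{tr}_{h_j|_L}\theta\le v_j(m-1)/u_j$. I therefore choose $v_j=\eta_j u_j/m$, which gives $P_{b_j}(h_j)\le c-\eta_j<c$. Both $u_j$ and $v_j$ can be taken to decrease to $0$.

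\textbf{Step 3: the class inequality.} Pointwise, $P_{b_j}(h_j)<c$ implies $\operatorname{tr}_{h_j}b_j<c\,m/(m-1)$, which is not the right bound. Instead I compute cohomologically:
\[
cA_j^m-mB_jA_j^{m-1}\to ct_j^mA^m-mt_j^{m-1}BA^{m-1}
\]
as $u_j,v_j\to0$. This limit equals
\[
t_j^{m-1}\Bigl(J(Z,\alpha,\beta)+c(t_j-1)\int_Z\alpha^m\Bigr),
\]
which is positive by $J$-nefness when $Z\ne X$, and equals $c\,t_j^{m-1}(t_j-1)\alpha^n>0$ when $Z=X$.

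Since the limit is positive, shrinking $u_j$ and $v_j$ further (for fixed $t_j$) preserves strict positivity. This is compatible with Step 2, because there only the ratio $v_j/u_j\le\eta_j/m$ matters. Making $u_j$ small forces $v_j$ small, and the cone estimate of Step 1 holds for every $\epsilon>0$.
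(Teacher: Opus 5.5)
Your proof is correct, but it takes a different route from the paper's.

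\textbf{The paper's route.} The paper uses no quantitative ambient margin and no degenerate linear algebra. It only needs that the pulled-back forms $c h_t^q-qbh_t^{q-1}$, with $h_t=g^*\Omega_t$, are semipositive $(q,q)$-forms for $q<m$. Pullback preserves this automatically, and that is where the kernel of $dg$ is absorbed. The paper then expands $c(h_t+u\theta)^q-q(b+v\theta)(h_t+u\theta)^{q-1}$ binomially. The mixed terms $\theta^\ell\wedge(ch_t^{q-\ell}-(q-\ell)bh_t^{q-\ell-1})$ are semipositive, and the pure term $cu^q\theta^q$ absorbs the $v$-perturbation once $v\le\frac c2\min_q u^q/(q(M_t+u)^{q-1})$. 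At $q=m$ the same identity, integrated, gives $cA_j^m-mB_jA_j^{m-1}>0$ for the same $(u,v)$, because the $\ell=0$ term integrates to $t^{m-1}(J(Z,\alpha,\beta)+c(t-1)A^m)$. So no separate continuity step is needed.

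\textbf{Your route.} You work with the $(m-1)$-plane trace throughout:
a compactness margin $2\eta_j$ for $\Omega_{t_j}$;
a pointwise lemma transferring that margin to $g^*\Omega_{t_j}+\epsilon\theta$;
absorption of $v_j\theta$ via $\operatorname{tr}_{h_j|_L}\theta\le(m-1)/u_j$;
and the top class handled by letting $u_j,v_j\to0$ at fixed $t_j$.
This buys a linear constraint $v_j\le\eta_ju_j/m$, whereas the paper's admissible $v$ is of order $u^m/M_t^{m-1}$. It also gives an explicit margin $P_{b_j}(h_j)\le c-\eta_j$. The cost is the degenerate trace lemma plus the extra limiting step.

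\textbf{On the trace lemma you flag as the main point.} It holds, but take $L''$ to be the $h$-orthogonal complement of $K=L\cap\ker dg$ in $L$, where $h=\omega_j+\epsilon\theta$. Since $b(k,\cdot)=0$ for $k\in K$ and $dg$ is injective on $L''$ with $dg(L'')=dg(L)$, you get $\operatorname{tr}_{h|_L}b|_L=\operatorname{tr}_{h|_{L''}}b|_{L''}\le\operatorname{tr}_{\omega_j|_{L''}}b|_{L''}=\operatorname{tr}_{\Omega_{t_j}|_{dg(L)}}b_X|_{dg(L)}$, using $h|_{L''}\ge\omega_j|_{L''}>0$. With a non-orthogonal complement, the first equality has to be replaced by the trace relative to the Schur complement (the quotient metric on $L/K$). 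That metric still dominates $\omega_j|_{L''}$, so the bound survives. Finally, $dg(L)$ may have dimension below $m-1$; your margin for all planes of dimension $\le n-1$ covers this case.
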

\end{samepage}

\begin{proof}
The resolution map is an immersion over \(Z_{\rm reg}\), so the pullback forms are positive there. The top gap is
\[
cA^m-mBA^{m-1}=J(Z,\alpha,\beta )\ge0
\]
when \(Z\ne X\), and is zero when \(Z=X\) by the definition of \(c\).

Pull back \(\Omega_t\) from Lemma~\ref{lem:ambient-approximation} and write \(h_t=g^*\Omega_t\). It is semipositive, and \(ch_t^q-qbh_t^{q-1}\ge0\) for \(1\le q<m\). Choose \(M_t\) with \(h_t\le M_t\theta\). For \(u>0\), choose
\[
0<v\le\frac c2\min_{1\le q\le m}\frac{u^q}{q(M_t+u)^{q-1}}.
\]
The exact identity
\begin{equation}\label{eq:perturbation}
\begin{aligned}
&c(h_t+u\theta)^q-q(b+v\theta)(h_t+u\theta)^{q-1}\\
&\quad=\sum_{\ell=0}^{q-1}\binom q\ell u^\ell\theta^\ell
\bigl(ch_t^{q-\ell}-(q-\ell)bh_t^{q-\ell-1}\bigr)\\
&\qquad+cu^q\theta^q-qv(h_t+u\theta)^{q-1}\theta.
\end{aligned}
\end{equation}
has last line at least \(cu^q\theta^q/2\). For \(q<m\) its other summands are semipositive, proving the strict pointwise cone inequality. For \(q=m\) the summands with \(\ell\ge1\) are semipositive, while the integral of the \(\ell=0\) term is
\[
t^{m-1}\bigl(J(Z,\alpha,\beta)+c(t-1)A^m\bigr)>0.
\] Thus the top integral is positive too. Choose any \(t_j\downarrow1,u_j\downarrow0\), and then \(v_j\) satisfying the bound and \(v_j<v_{j-1}/2\). Set \(h_j=h_{t_j}+u_j\theta\). This proves \eqref{eq:resolved-cone}. 
\end{proof}

The next lemma is the mass concentration trick used by Chen \cite{C21}.  We present the details here to handle the approximations on the resolutions and the cone inequality for semipositive form. 

\begin{samepage}
\begin{lemma}\label{lem:concentration}
There is a closed positive current \(S\in A-\varepsilon B\), with fixed \(\varepsilon>0\), satisfying the cone inequality \(P_b(S)\le c\) in Chen's sense of local convolution,  wherever \(b>0\). 
\end{lemma}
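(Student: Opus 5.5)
Following Chen's argument in \cite{C21}, I would run the mass concentration scheme on the resolution \(Y\) with the approximating data of Lemma~\ref{lem:resolution-approximation}, and then pass to a weak limit in \(j\).

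\textbf{Step 1: the product manifold.} For each \(j\), work on \(M=Y\times Y\) (so \(N=2m\)) with the diagonal \(D=\Delta\), which has codimension \(m\). Put the K\"ahler data \(\chi=\pi_1^*b_j+\pi_2^*b_j\) and \(\omega_0=\pi_1^*h_j+\pi_2^*h_j\). Since \(P_{b_j}(h_j)<c\), the cone condition \(P_\chi(\omega_0)<D\) holds with \(D=2c\), up to the routine linear-algebra bookkeeping for the product. For the right-hand side, take \(F=\kappa_j\,\Theta_\eta^m\Theta^{m}/\chi^{2m}\) plus a constant. Here \(\Theta_\eta\) is the concentrating family of \S\ref{sec:mass-concentration}, and the constant is chosen so that \(\int F\chi^N=D[\omega_0]^N-N[\chi][\omega_0]^{N-1}\). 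The strict top inequality \(cA_j^m-mB_jA_j^{m-1}>0\) from \eqref{eq:resolved-cone} makes this integral positive. It therefore absorbs a concentrated term of fixed mass \(\kappa\) together with a nonnegative constant, and the lower bound \(F>-1/(2ND^{N-1})\) holds.

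\textbf{Step 2: solve and push forward.} Theorem~\ref{thm-Chen} gives \(\omega_{\varphi}\in[\omega_0]\) with \(\operatorname{tr}_{\omega_\varphi}\chi+F\chi^N/\omega_\varphi^N=D\) and \(P_\chi(\omega_\varphi)<D\). Near the diagonal, where \(F\) is large, the equation forces \(\omega_\varphi^N\) to be large compared with \(\chi^N\). By the arithmetic–geometric inequality, the positive form \(\omega_\varphi^m\) then carries a definite amount of mass in \(U_\eta\).

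Next push forward the \((1,1)\)-current \((\pi_1)_*(\omega_\varphi^{m}\wedge[\cdot])\) by integrating against \(\pi_2^*\) of a diagonal form, as in \cite{DP04}. Letting \(\eta\to0\), the currents \(\omega_{\varphi,\eta}^m\) converge weakly to \(T\ge \kappa'[\Delta]+R\) on \(Y\times Y\). Taking \((\pi_1)_*(T\wedge\pi_2^*b_j)\) gives a current in \(A_j\) whose part coming from \([\Delta]\) is a positive multiple of \(b_j\), which lies in \(B_j\). Subtracting this diagonal contribution yields a closed positive current \(S_j\in A_j-\varepsilon B_j\), where \(\varepsilon>0\) is controlled by the mass constant \(b_U\) and the top gap. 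Both of these are uniform in \(j\), since \(cA_j^m-mB_jA_j^{m-1}\) stays bounded below by its limit plus the \(u_j\)-term.

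\textbf{Step 3: the cone inequality survives the subtraction.} I expect this to be the main obstacle. The problem is that removing a positive multiple of \(b_j\) from a current satisfying \(P_{b_j}<c\) increases \(P\). Following the eigenvalue-truncation idea mentioned in the outline, I would subtract only on the part where the eigenvalues of \(b_j\) relative to the absolutely continuous part exceed the threshold. Concretely, write the density relative to \(b_j\) in a simultaneous frame and cap each eigenvalue from below. The monotonicity and convexity in Lemma~\ref{lem:matrix-cone} should keep \(P_{b_j}(S_j)\le c\) in the sense of local convolution. Here we use that local convolution preserves \(P\le c\), because \(P_b\) is convex in \(h\) for frozen \(b\); where \(b\) varies, we need \(b>0\) to control the error, and this is why the statement restricts to \(b>0\).

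\textbf{Step 4: limit in \(j\).} The masses of \(S_j\) are bounded because their classes converge. Extract a weak limit \(S\in A-\varepsilon B\). It is closed and positive. Since \(P_{b_j}\) is monotone in \(b_j\downarrow b\), and convex so that it is lower semicontinuous under weak convergence of local convolutions, the limit satisfies \(P_b(S)\le c\) on \(\{b>0\}\).
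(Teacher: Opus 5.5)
Your proposal has a genuine gap at the heart of the lemma. Nothing in Steps 1--2 shows that the fiber integral $\frac{1}{V_j}(\pi_1)_*(\Xi^m\wedge\pi_2^*b_j)$ satisfies $P_{b_j}\le c$. Step 3 takes this for granted, and with your product data it is not available. First, $P_{b_j}(h_j)<c$ does not give $P_\chi(\pi_1^*h_j+\pi_2^*h_j)<2c$. At a point of $\Delta$, the optimal $(2m-1)$-plane contains a whole factor, so $P_\chi=P_{b_j}(h_j)+\operatorname{tr}_{h_j}b_j$, and $\operatorname{tr}_{h_j}b_j$ is only bounded by $\frac{m}{m-1}c$. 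For example, $m=2$ with both eigenvalues $0.9c$ gives $2.7c$. For $Z=X$ nothing controls the full trace of $h_j$; this is why the paper first solves Chen's equation on $Y$ to get $\omega_j\in A_j$ with $\operatorname{tr}_{\omega_j}b_j<c$. Second, even with a valid initial metric, your symmetric class $\pi_1^*A_j+\pi_2^*A_j$ with $D=2c$ does not reach $c$. The Schur-complement bound $P_\chi(\Xi)\ge P_{b_j}(\widehat H)+\operatorname{tr}_Gb_j$, convexity over the fiber probability measure $(\operatorname{tr}_Gb_j)G^m/V$, and Cauchy--Schwarz give only $P_{b_j}\le 2c-mB_jA_j^{m-1}/A_j^m=c+(cA_j^m-mB_jA_j^{m-1})/A_j^m$. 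This stays above $c$ by about $J(Z,\alpha,\beta)/\int_Z\alpha^m$ for non-null $Z$. The paper instead takes $L_j=\pi_1^*A_j+\pi_2^*B_j$, initial metric $\omega_j\oplus b_j$, and $D=c+m$. Then $f_j^0$ is exactly the top gap on $Y$, the fiber has $\int G^m=B_j^m$ and $V_j=mB_j^m$, and Cauchy--Schwarz gives $P_{b_j}(R_{j,\eta})\le D-m=c$ on the nose.

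Your uniformity argument for $\varepsilon$ also fails. The compatibility integral is proportional to $cA_j^m-mB_jA_j^{m-1}$, which tends to $J(Z,\alpha,\beta)$. That limit is $0$ for $Z=X$ (by the definition of $c$) and for null $Z$, precisely the cases the lemma must handle. So a concentrated term of fixed mass plus a nonnegative constant cannot fit, and your $\varepsilon_j$ would tend to $0$. Chen's device, used in the paper, is $F_{j,\eta}=f_j^0+q\bigl(\chi_{j,s,\eta}^N/\chi_j^N-r_j(s)\bigr)$ with $q,s$ fixed. Its constant part is negative but $\ge-2q>-1/(2ND^{N-1})$, which Chen's theorem allows. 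Then $\Xi_{j,\eta}>D^{-1}\chi_j$ yields $D\Xi_{j,\eta}^N\ge q\chi_{j,s,\eta}^N$, hence a diagonal mass $d_*$ independent of $j$. Finally, the subtraction in Step 3 does not work by capping eigenvalues from below. The paper caps the eigenvalues of $E$ from above at $1/\zeta$ before averaging, so the average $H^\zeta$ is bounded with $P_{b_j}(H^\zeta)\le c+(m-1)\zeta$. Its contribution near $\Delta$ is at most $\zeta^{-1}$ times the mass of $\Xi^{m-1}\wedge\pi_2^*b_j$ there, which vanishes in the limit because $\mathbf 1_\Delta U_j=0$. Hence $R_j-H^\zeta\ge(d_j/V_j)b_j$, so $S_j=R_j-\varepsilon b_j\ge H^\zeta$, and order reversal transfers the cone bound. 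This use of the vanishing diagonal mass of $\Xi^{m-1}$ is the idea your Step 3 is missing.
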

\end{samepage}

\begin{proof}
\emph{Step 1. The auxiliary equation on the product.}

By Lemma~\ref{lem:resolution-approximation}, we can apply Chen's theorem to obtain \(\omega_j\in A_j\) with
\[
\operatorname{tr}_{\omega_j}b_j+
f_j b_j^m/\omega_j^m=c,\qquad
f_j=(cA_j^m-mA_j^{m-1}B_j)/B_j^m>0,
\]
where \(b_j=b+v_j\theta\). In particular the full trace $\tr_{\omega_j}b_j<c$.

On \(M=Y\times Y\), put
\[
N=2m,\quad D=c+m,\quad
\chi_j=\pi_1^*b_j+\pi_2^*b_j,\quad
L_j=\pi_1^*A_j+\pi_2^*B_j.
\]
Here $\pi_1$ is the projection of $Y\times Y$ to the first $Y$, and similarly for $\pi_2$.
The product \(\omega_j\oplus b_j\) has full trace \(<D\). Direct product expansion gives the exact cancellation
\[
f_j^0:=\frac{DL_j^N-N[\chi_j]L_j^{N-1}}{[\chi_j]^N}
=\frac{cA_j^m-mA_j^{m-1}B_j}{B_j^m}=f_j>0.
\]
The denominator tends to \(\binom{2m}{m}(B^m)^2>0\).

Take the logarithmic regularizations (discussed in section~\ref{sec:mass-concentration}) \(\Theta_\eta\) for the diagonal subvariety $\Delta:=\{(y,y)\in Y\times Y\},$ where $\Theta=\pi_1^*\theta+ \pi_2^*\theta$.  Choose a fixed small \(s>0\) and let
\[
\chi_{j,s,\eta}=(1-s)\chi_j+s\Theta_\eta,\qquad
r_j(s)=\frac{((1-s)[\chi_j]+s[\Theta])^N}{[\chi_j]^N}.
\]
The classes vary in a compact set and the denominator is bounded below, so \(r_j(s)=1+O(s)\) uniformly. Choose \(s\) so \(r_j(s)\le2\). This $s$ remains fixed as \(\eta\to0\).

Set
\[
q=\frac1{16ND^{N-1}},\qquad
F_{j,\eta}=f_j^0+q\left(\frac{\chi_{j,s,\eta}^N}{\chi_j^N}-r_j(s)\right).
\]
Then \(F_{j,\eta}>-2q>-1/(2ND^{N-1})\), and its integral is exactly the required positive compatibility value. Chen's theorem~\ref{thm-Chen} gives \(\Xi_{j,\eta}\in L_j\) with
\[
P_{\chi_j}(\Xi_{j,\eta})<D,\qquad
D\Xi_{j,\eta}^N=N\chi_j\Xi_{j,\eta}^{N-1}
+F_{j,\eta}\chi_j^N.
\]
Every eigenvalue of \(\chi_j\) relative to \(\Xi_{j,\eta}\) is \(<D\), so \(\Xi_{j,\eta}>D^{-1}\chi_j\). Therefore
\begin{equation}\label{eq:product-volume}
D\Xi_{j,\eta}^N
\ge q\chi_{j,s,\eta}^N+
\left(ND^{1-N}+f_j^0-qr_j(s)\right)\chi_j^N
\ge q\chi_{j,s,\eta}^N.
\end{equation}

\emph{Step 2. Uniform diagonal mass.}

Write \(\lambda_1\le\cdots\le\lambda_{2m}\) for the eigenvalues of \(\Xi_{j,\eta}\) relative to \(\chi_{j,s,\eta}\). Equation \eqref{eq:product-volume} gives \(\prod_i\lambda_i\ge q/D\). Cohomology gives
\[
\int \lambda_{m+1}\cdots\lambda_{2m}\chi_{j,s,\eta}^{2m}
\le(2m)!\int\Xi_{j,\eta}^m\chi_{j,s,\eta}^m\le M_0
\]
uniformly in \(j,\eta\). Also \(\chi_{j,s,\eta}\ge(s/2)\Theta\), and
\[
\int_{U\cap U_\eta}\chi_{j,s,\eta}^m\Theta^m\ge s^mb_U.
\]
The set where \(\lambda_{m+1}\cdots\lambda_{2m}>M_0/\delta\) has
\(\chi_{j,s,\eta}^{2m}\)-volume at most \(\delta\), hence
\(\chi_{j,s,\eta}^m\Theta^m\)-mass at most \((2/s)^m\delta\).
Outside it,
\(\Xi_{j,\eta}^m\ge(q\delta/(DM_0))\chi_{j,s,\eta}^m\).
Take \(\delta>0\) so \((2/s)^m\delta<s^mb_U/2\). This gives a fixed positive lower bound on
\(\int_{U\cap U_\eta}\Xi_{j,\eta}^m\Theta^m\).

For each \(j\), take \(\eta\downarrow0\) along a subsequence with
\(\Xi_{j,\eta}^m\rightharpoonup Q_j\). Skoda's restriction theorem and the support theorem give
\begin{equation}\label{eq:diagonal-mass}
Q_j\ge d_*[\Delta]
\end{equation}
for one \(d_*>0\) independent of \(j\). The uniformity follows from the fixed local lower bound and the irreducibility of \(\Delta\). To justify passage to the limit, take a smooth cutoff equal to one on a fixed neighborhood of the diagonal. All sufficiently small concentration tubes lie in that neighborhood, so the integral against the cutoff is bounded below by the same positive constant. First pass to the weak limit, and then shrink these cutoff neighborhoods to the diagonal. The trace mass of \(\mathbf1_\Delta Q_j\) is thus uniformly positive. Since this restriction is \(d_j[\Delta]\), division by the fixed volume \(\int_\Delta\Theta^m\) gives the uniform \(d_*\). 

\emph{Step 3. Fiber integration and subtraction of diagonal mass.}

Put \(V_j=mB_j^m\) and
\[
R_{j,\eta}=\frac1{V_j}(\pi_1)_*
(\Xi_{j,\eta}^m\wedge\pi_2^*b_j)\in A_j.
\]
The coefficient \(m\) follows from
\((\pi_1)_*(L_j^m\pi_2^*B_j)=mA_jB_j^m\).
We use the decomposition $T^{1,0}_{(x,y)}=T^{1,0}_xY\oplus T_y^{1,0}Y$, and write \[
\Xi_{j,\eta}=\begin{pmatrix}
    H&W\\W^*&G
\end{pmatrix},\qquad \chi_j=\begin{pmatrix}
    b_j&\\&b_j
\end{pmatrix}.
\]
Since $\Xi_{j,\eta}$ is K\"ahler, $G$ is positive definite, and the Schur complement $\widehat{H}:=H-WG^{-1}W^*$ is positive definite. The component of $(\Xi_{j,\eta}+t\pi_2^*b_j)^{m+1}$ having degree $(1,1)$ in the first component of $T^{1,0}_xY$ and degree $(m,m)$ in the second component $T^{1,0}_yY$ is \[
(m+1)\widehat{H}_t\wedge G_t^m, \quad \text{ where }G_t=G+tb_j,\quad \widehat{H}_t=H-WG_t^{-1}W^*.
\]
Differentiating in $t$ gives that the component of $\Xi_{j,\eta}^m\wedge\pi_2^*b_j$ contributing to fiber integration is precisely \[
[(\tr_Gb_j)\widehat{H}+WG^{-1}b_jG^{-1}W^*]\wedge G^m.
\]
Let $E:=\widehat{H}+(\tr_Gb_j)^{-1}WG^{-1}b_jG^{-1}W^*$, then 
\[
E\ge H-WG^{-1}W^*.
\]We claim that $P_{b_j}(E)\le D-\tr_Gb_j:$
This is because by \cite[Lemma 3.5]{Chen21},  \[
D\ge P_{\chi_j}(\Xi_{j,\eta})\ge P_{b_j}(\widehat{H})+\tr_Gb_j.
\]
Since $E\geq \widehat{H}$, we get  $P_{b_j}(E)\leq D-\tr_Gb_j.$

For fixed $x$, the fiber measure \((\tr_Gb_j) G^m/V_j\) has mass one on $\{x\}\times Y$, because
\(\int G^m=B_j^m\) and \(\int(\tr_Gb_j)G^m=mB_j^m=V_j\). Thus the fiber integration gives \[
R_{j,\eta}(x)=\int_{\{x\}\times Y}E(x,y)(\tr_Gb_j)G^m/V_j.
\]
By convexity of the cone function $P_b$, 
\begin{equation}\label{eq:fiber-cone}
P_{b_j}(R_{j,\eta})
\le D-\frac1{V_j}\int(\tr_Gb_j)^2G^m
\le D-\frac{V_j}{B_j^m}=c.
\end{equation}

Choose, at each fixed \(j\), one subsequence with
\[
\Xi_{j,\eta}^m\rightharpoonup Q_j,\qquad
\Xi_{j,\eta}^{m-1}\rightharpoonup U_j,\qquad
\mathbf1_\Delta Q_j=d_j[\Delta],\quad d_j\ge d_*,\quad
\mathbf1_\Delta U_j=0.
\]
The powers have bounded mass by their cohomology classes. The restriction and support statements above give both diagonal identities; further subsequences preserve them. Let \(R_{j,\eta}\rightharpoonup R_j\). We next use eigenvalue truncation to subtract the diagonal contribution while preserving the cone inequality for currents.

\emph{Step 4. Truncation of the large eigenvalues}

For \(0<\zeta<D\), truncate the eigenvalues of \(E\) relative to  \(b_j\) from above at \(1/\zeta\), obtaining \(E^\zeta\)(For instance, at point $(x,y)$, choose a $b_j(x)$-orthonormal basis of $T^{1,0}_xY$ in which $E$ is diagonal, with eigenvalues $0<\lambda_1\le\cdots\le\lambda_m$. Take $E^\zeta=\text{diag}(\min\{\lambda_1,\zeta^{-1}\},...,\min\{\lambda_n,\zeta^{-1}\})$. Such definition is independent of the choice of orthonormal basis). Since \(m\ge2\) and \(P_{b_j}(E)<D\),
\[
D^{-1}b_j\le E^\zeta\le\zeta^{-1}b_j,\quad E^\zeta\le E,\quad
P_{b_j}(E^\zeta)\le P_{b_j}(E)+(m-1)\zeta.
\]
Let \(H_\eta^\zeta\) be its average with the same probability measure above:\[
H_\eta^\zeta(x):=\int_{\{x\}\times Y}E^\zeta(x,y)d\mu_x(y), \qquad \text{ where }d\mu_x(y)=(\tr_Gb_j)G^m/V_j.
\]
Then by convexity of $P_{b_j}$,
\begin{equation}\label{eq:truncation}
D^{-1}b_j\le H_\eta^\zeta\le\zeta^{-1}b_j,\qquad
P_{b_j}(H_\eta^\zeta)\le c+(m-1)\zeta.
\end{equation}
These averages need not be closed, but their coefficients are bounded in \(L^\infty\). For a countable \(\zeta_\ell\downarrow0\), choose one further subsequence on which every \(H_\eta^{\zeta_\ell}\) converges weak-* to \(H^{\zeta_\ell}\). The first inequality in \eqref{eq:truncation} can be passed to the weak-* limit $H^{\zeta_\ell}$ and holds almost everywhere. The second inequality in \eqref{eq:truncation} also passes to the weak-* limits as follows: applying the affine variational formula(follows by completing the square)
\[\begin{aligned}
P_{b_j}(H_\eta^{\zeta_\ell})(x)&=\sup_{\dim_\mathbb{C} L=m-1}\sup_{Z\in \mathbb{C}^{(m-1)\times(m-1)}}\tr\bigl(2\text{Re}(b_j|_L(x)^\frac12Z)-Z^*H_{\eta}^{\zeta_\ell}|_L(x)Z\bigr)\\
&=:\sup_{\dim_\mathbb{C} L=m-1}\sup_ZF_{L,Z,b_j}(H_\eta^{\zeta_\ell}),   
\end{aligned}
\]
gives, for any nonnegative $\phi\in C^\infty_c(Y)$, \[
\int_Y\phi(x)F_{L,Z,b_j}(H^{\zeta_\ell})dV(x)\leq \int_Y(c+(m-1)\zeta_\ell)\phi dV(x).
\]
The integral inequality can be passed to the weak-* limit $H^{\zeta_\ell}$ and therefore, \[
P_{b_j}(H^{\zeta_\ell})\leq c+(m-1)\zeta_\ell
\quad \text{almost everywhere.}\]
This gives \eqref{eq:truncation} almost everywhere for each limit.

Next, we fix one $\zeta_\ell$ and denote it simply by $\zeta$. Let \(\rho_t\) be smooth cutoffs, equal to one near \(\Delta\), with \(0\le\rho_t\le1\) and support shrinking to \(\Delta\). Define the truncated average over the tubular neighborhood by
\[
\mathcal C_{\eta,t}^\zeta(x)
=\int_{\{x\}\times Y}\rho_t E^\zeta d\mu_x(y).
\]
This is a matrix-valued density on the base. Positivity outside the tube gives
\[
R_{j,\eta}-H_\eta^\zeta\ge
\frac1{V_j}(\pi_1)_*(\rho_t\Xi_{j,\eta}^m\pi_2^*b_j)
-\mathcal C_{\eta,t}^\zeta.
\]
As matrix-valued measures on the base, we have
\begin{equation}\label{eq:tube-estimate}
0\le\mathcal C_{\eta,t}^\zeta
\le\frac{m}{V_j\zeta}\,b_j\,(\pi_1)_*
(\rho_t\Xi_{j,\eta}^{m-1}\wedge\pi_2^*b_j).
\end{equation}
 Testing the right side of \eqref{eq:tube-estimate} against a positive smooth \((m-1,m-1)\)-form \(\xi\) on the base gives
\[
\frac{m}{V_j\zeta}\int_{Y\times Y}
\rho_t\Xi_{j,\eta}^{m-1}\pi_2^*b_j\wedge\pi_1^*(b_j\wedge\xi).
\]
First let \(\eta\to0\) with \(j,\zeta,t\) fixed, then let \(t\to0\). The upper bound tends to zero because \(\Xi_{j,\eta}^{m-1}\rightharpoonup U_j, \mathbf1_\Delta U_j=0\). The untruncated term tends to
\[
\frac1{V_j}(\pi_1)_*(\mathbf1_\Delta Q_j\wedge\pi_2^*b_j)
=\frac{d_j}{V_j}b_j.
\]
Consequently, as currents,
\begin{equation}\label{eq:subtraction}
R_j-H^\zeta\ge\frac{d_j}{V_j}b_j.
\end{equation}
Set \(\varepsilon=d_*/(2\sup_jV_j)>0\) and \(S_j:=R_j-\varepsilon b_j\). Then \(S_j\ge H^{\zeta_\ell}+\varepsilon b_j\ge0\) for every \(\ell\). On absolutely continuous parts, order reversal gives
\[
P_{b_j}((S_j)_{\rm ac})\le c+(m-1)\zeta_\ell \quad \text{almost everywhere.}
\]
Intersect the countably many full-measure sets and let \(\ell\to\infty\).

For each constant form $b_0$ with \(0<b_0\leq b_j\)  on a coordinate ball, the same density inequality $P_{b_0}((S_j)_{\rm ac})\leq c+(m-1)\zeta_\ell$ holds. Convexity gives it for convolutions of the absolutely continuous part; adding the convolved positive singular part preserves it. This proves $P_{b_j}(S_j)\leq c+(m-1)\zeta_\ell$ in the local-convolution sense.

Finally, the classes \(A_j-\varepsilon B_j\) stay bounded, so the positive \(S_j\) have bounded mass. A subsequential weak limit \(S\) lies in \(A-\varepsilon B\). Where \(b>0\), every constant positive reference form below \(b\) is below all \(b_j=b+v_j\theta\), so the fixed-convolution inequalities pass to the limit. Shrinking convolutions at Lebesgue points, then letting the constant references approach \(b\) at the center, gives \(P_b(S_{\rm ac})\le c\) almost everywhere on the positive locus of \(b\).
\end{proof}

\section{Strict positivity and regularization}

\begin{samepage}
\begin{lemma}\label{lem:strict-current}
Under the assumptions of Lemma~\ref{lem:concentration} there are fixed \(0<\lambda<1\), \(\gamma,k_0>0\), and a closed current
\[
R\in\lambda A-\gamma[\theta],\qquad R\ge k_0\theta,
\]
whose local regularizations $R_\rho$ with regularization parameter $\rho$, obtained by convolving the entire local potentials, satisfy on a finite atlas
\begin{equation}\label{eq:strict-regularization}
P_b(R_\rho)\le c_2+C'\rho,\qquad R_\rho\ge k'I,
\end{equation}
for fixed \(c_2<c\), \(C',k'>0\). The estimates include points where \(b\) degenerates.
\end{lemma}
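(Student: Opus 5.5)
We build \(R\) as a convex combination of the current \(S\in A-\varepsilon B\) from Lemma~\ref{lem:concentration} with a smooth form taken from the approximation Lemma~\ref{lem:resolution-approximation}. The point is that \(S\) carries a class deficit of \(\varepsilon B\), but satisfies only the non-strict bound \(P_b(S)\le c\). A smooth strict form has a strict margin but no deficit. Mixing the two should give both a class strictly below \(A\) and a strict cone margin, uniformly, including where \(b\) degenerates.

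\emph{Step 1: the construction.} Fix \(j\) large and put \(h=h_j\in A_j=t_jA+u_j[\theta]\), so that \(P_{b_j}(h)<c\). By compactness \(P_{b_j}(h)\le c-\eta_0\) on \(Y\). Since \(b\le b_j\), monotonicity in \(b\) gives \(P_b(h)\le c-\eta_0\) everywhere, including where \(b\) degenerates. For \(s\in(0,1)\) set
\[
R=(1-s)S+s\,h-\kappa\,\theta .
\]
Its class is \(\bigl((1-s)+st_j\bigr)A-(1-s)\varepsilon B+(su_j-\kappa)[\theta]\). Because \(B=[b]\) is big and nef on \(Y\), the term \(-\varepsilon B\) can be traded: write \(B=\mu[\theta]+(B-\mu[\theta])\), using a pseudoeffective decomposition when \(B-\mu[\theta]\) is not semipositive. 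The cleaner route is to absorb \(-\varepsilon B\) by rescaling. Choose \(j\) with \(t_j-1\) and \(u_j\) small compared with \(\varepsilon\), so that \(t_j-1\le\varepsilon/2\). Since \(B\) and \(A\) are comparable up to \([\theta]\)-errors on the resolution, this puts the class in \(\lambda A-\gamma[\theta]\) for some \(\lambda<1\) and \(\gamma>0\). The lower bound \(R\ge k_0\theta\) comes from \(S\ge0\), \(h\ge u_j\theta\) and \(\kappa=su_j/2\), giving \(k_0=su_j/2\).

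\emph{Step 2: the strict cone bound.} By convexity of \(P_b\), applied pointwise to absolutely continuous parts and then through the local-convolution sense of Lemma~\ref{lem:concentration},
\[
P_b\bigl((1-s)S+sh\bigr)\le(1-s)c+s(c-\eta_0)=c-s\eta_0 .
\]
Subtracting \(\kappa\theta\) raises \(P_b\) by at most \(O(\kappa)\), since \(h\ge u_j\theta\) keeps the argument uniformly positive. Hence \(P_b(R)\le c_2:=c-s\eta_0/2\).

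This bound is only known a.e. where \(b>0\), so a separate argument is needed where \(b\) degenerates. There the smooth part \(sh\ge su_j\theta\) gives a uniform lower bound on \(R\), and \(b\le M\theta\). Thus \(P_b(R)\) is controlled wherever \(b\) is small. We take the smallness of \(b\) quantitatively, via a neighbourhood of \(\{b\text{ degenerate}\}\) on which \(\operatorname{tr}_\theta b\) is small, and use semicontinuity of the local convolution inequality across it.

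\emph{Step 3: regularization.} On a finite atlas, write \(R=dd^c\phi_i\) and convolve: \(R_\rho=dd^c(\phi_i*\rho)\). Convolution preserves \(R_\rho\ge k_0\theta_{\rm const}-C\rho\). This gives \(R_\rho\ge k'I\) for small \(\rho\), with \(k'\) coming from \(k_0\). For the cone function, freeze \(b\) at constant reference forms, as in the end of the proof of Lemma~\ref{lem:concentration}. Convexity gives \(P_{b(x_0)}(R_\rho)(x_0)\le c_2\) up to the variation of \(b\) over a \(\rho\)-ball. That variation is Lipschitz, and the order-reversal combined with \(R_\rho\ge k'I\) converts it into the error \(C'\rho\).

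\emph{Main obstacle.} The hardest step is making Step 2 and Step 3 uniform at points where \(b\) degenerates, because Lemma~\ref{lem:concentration} only gives information on \(\{b>0\}\). I would first try to use the strict smooth summand \(sh\) to dominate there. The claim to check is that \(P_b\) of a form \(\ge su_j\theta\) is at most \(\operatorname{tr}_{su_j\theta}b\), which is small near the degenerate locus. A secondary difficulty is the class bookkeeping: absorbing \(-\varepsilon B\) into \(\lambda A-\gamma[\theta]\) with \(\lambda<1\) requires choosing \(s\), \(j\) and \(\kappa\) in the right order.
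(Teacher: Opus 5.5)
There is a genuine gap in Step 1, and it cannot be fixed by choosing $s$, $j$, $\kappa$ in a better order.

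\textbf{The class is wrong.} Your current $(1-s)S+sh_j-\kappa\theta$ lies in $\bigl(1+s(t_j-1)\bigr)A-(1-s)\varepsilon B+(su_j-\kappa)[\theta]$, while the statement asks for the exact class $\lambda A-\gamma[\theta]$. Comparability of $A$ and $B$ only gives inequalities in the positive cone. It never gives an identity in $H^{1,1}(Y,\mathbb R)$, where $A$, $B$, $[\theta]$ are in general independent. Also, the coefficient of $A$ exceeds $1$, because $t_j>1$.

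The $B$-term also breaks the later argument. The gluing lemma needs $[Q_j]-[R]$ to converge to a K\"ahler class. Here the limit is $-s(t_j-1)A+(1-s)\varepsilon B-(su_j-\kappa)[\theta]$, which is in general not K\"ahler, since $B$ vanishes on curves contracted by $g$.

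\textbf{The missing idea.} You need to refill the deficit with a K\"ahler current. Since $B$ is nef with $B^m>0$, Demailly--P\u{a}un give $K_B\in B$ with $K_B\ge\kappa_B\theta$. Then $T=S+\varepsilon K_B$ lies exactly in $A$ and satisfies $T\ge\varepsilon\kappa_B\theta$.

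The same current produces the strict margin, so $h_j$ is not needed. From $b\le M\theta$ you get $(K_B)_{\rm ac}\ge d\,b$ with $d=\kappa_B/M$. Apply concavity of $x\mapsto x/(1+\varepsilon dx)$ to the eigenvalues of $b$ relative to $S_{\rm ac}$ on each $(m-1)$-plane. This gives $P_b(T_{\rm ac})\le c/(1+\varepsilon dc/(m-1))<c$ a.e.

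Finally set $R=\lambda T-\gamma\theta$ with $\lambda$ close to $1$ and $\gamma$ small. Then $R\ge(\lambda-\gamma/(\varepsilon\kappa_B))T$, so homogeneity and order reversal keep a margin $c_2<c$, and $R\ge(\lambda\varepsilon\kappa_B-\gamma)\theta$.

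\textbf{Two smaller points.}

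Your choice $\kappa=su_j/2$ destroys the margin even apart from the class issue. From $(1-s)S+sh_j\ge su_j\theta$ you only get $R\ge\frac12\bigl((1-s)S+sh_j\bigr)$, hence $P_b(R)\le 2(c-s\eta_0)$. The loss from subtracting $\kappa\theta$ has relative size $\kappa/(su_j)$, not $O(\kappa)$.

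The ``main obstacle'' at the degenerate locus of $b$ is not real. That locus is a proper analytic set, hence Lebesgue-null. The convolution step only uses an a.e.\ bound on the absolutely continuous density. Convexity, order reversal, and the estimate $P_{b(y)}(h)\le P_{b(x)}(h)+(m-1)C|x-y|/k$ for $h\ge kI$ all need only $b\ge0$.

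Your proposed remedy there would not work anyway. Degeneracy of $b$ is a drop in rank, not smallness: $\operatorname{tr}_\theta b$ need not be small near that locus, and $\operatorname{tr}_{su_j\theta}b$ is large when $u_j$ is small.

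With these corrections, your Step 3 is the paper's regularization argument.
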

\end{samepage}

\begin{proof}
The class \(B\) is nef with \(B^m>0\), so there is a current \(K_B\in B\), \(K_B\ge\kappa\theta\).

Set \(T=S+\varepsilon K_B\in A\). Then \(T\ge\varepsilon\kappa\theta\). Since \(b\le M\theta\),  \((K_B)_{\rm ac}\geq d\,b\)  for a fixed \(d>0\). Jensen's inequality applied to the inverse eigenvalues gives
\begin{equation}\label{eq:strict-margin}
P_b(T_{\rm ac})\le
\frac{c}{1+\varepsilon d\,c/(m-1)}=:c_1<c
\end{equation}
almost everywhere outside the exceptional set.  

Choose \(\lambda<1\) sufficiently close to $1$ and \(\gamma>0\) sufficiently small so that
\[
R=\lambda T-\gamma\theta\ge k_0\theta,\qquad
P_b(R_{\rm ac})\le c_2<c.
\]
This follows from \(R\ge(\lambda-\gamma/(\varepsilon\kappa))T\), homogeneity, and order reversal. Its class is \(\lambda A-\gamma[\theta]\).

On a finite coordinate atlas, identify forms with Hermitian matrices. The density of \(R_{\rm ac}\) is at least \(kI\), and
\(\|b(x)-b(y)\|\le C|x-y|\). Thus
\[
P_{b(y)}(R_{\rm ac}(x))
\le c_2+(m-1)C|x-y|/k, \quad \text{almost everywhere.}
\]
After convolution, convexity and the positivity of the singular part of $R$ give
\begin{equation}\label{eq:convolution}
P_{b(y)}(R_\rho(y))\le c_2+C'\rho,\qquad
R_\rho\ge k'I.
\end{equation}
This argument requires only semipositivity of \(b\), since convexity and order reversal remain valid where \(b\) degenerates.
\end{proof}

Next, we prove a gluing construction analogous to \cite[Lemma4.2]{SSD} allowing $b$ to be semipositive, and obtaining constants  $k, \eta$ uniform in $j,K$.
\begin{samepage}
\begin{lemma}\label{lem:gluing}
Let \(R=r_0+dd^c\varphi\) be fixed and satisfy \eqref{eq:strict-regularization}. Let
\[
Q_j=r_0+H_j,\qquad H_j\longrightarrow H_0>0
\]
smoothly, with \(H_j\) closed and K\"ahler for large \(j\). Suppose smooth \(\psi_j\) give global K\"ahler forms \(Q_j+dd^c\psi_j\) with \(P_b<c\). There exist a fixed \(\delta_0>0\) and a proper analytic set \(E_0=\{\nu(R,\cdot)\ge\delta_0\}\), such that for every compact \(K\subset Y\setminus E_0\) there are smooth K\"ahler forms \(\widehat h_{j,K}\in[Q_j]\) with
\begin{equation}\label{eq:glued-forms}
\begin{aligned}
P_b(\widehat h_{j,K})&<c &&\text{on }Y,\\
\widehat h_{j,K}\ge k\theta,\qquad
P_b(\widehat h_{j,K})&\le c-\eta &&\text{on }K,
\end{aligned}
\end{equation}
where \(k,\eta>0\) are independent of \(j,K\). The construction applies for every sufficiently large \(j\). 
\end{lemma}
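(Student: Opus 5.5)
The plan is the Blocki--Kolodziej / Demailly regularized-maximum gluing, in the form of \cite[Lemma 4.2]{SSD26}, applied to the local regularizations $R_\rho$ from Lemma~\ref{lem:strict-current}.

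\emph{Local building blocks.} Fix a finite atlas $\{W_i\}$ on which $R=dd^c u_i$, with $u_i=\phi_i+\varphi$ and $\phi_i$ a smooth local potential of $r_0$. Let $u_{i,\rho}$ be the convolutions, so that $dd^c u_{i,\rho}=R_\rho$ satisfies \eqref{eq:strict-regularization}. Write $H_j=dd^c\chi_{i,j}$ locally, with $\chi_{i,j}$ converging smoothly to local potentials of $H_0>0$. The local candidates are
\[
w_{i,j,\rho}=u_{i,\rho}+\chi_{i,j}+C_i\bigl(\text{small quadratic corrections}\bigr).
\]
Since $H_j\ge0$ for large $j$, order reversal gives
\[
P_b(R_\rho+H_j)\le P_b(R_\rho)\le c_2+C'\rho,
\]
and also $R_\rho+H_j\ge k'I$. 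Both bounds are uniform in $j$.

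\emph{Gluing.} The transition differences satisfy
\[
u_{i,\rho}-u_{l,\rho}=(\phi_i-\phi_l)_\rho\to\phi_i-\phi_l
\]
uniformly. To glue by a regularized maximum, each local candidate must lose to its neighbours near $\partial W_i$. I would do this in the standard way: shrink $W_i''\Subset W_i'\Subset W_i$, add $\epsilon_0(|z|^2-r_i^2)$-type bumps, and use Demailly's estimate $u_\rho\ge u-C\rho^2$, together with Lelong-number control $u_\rho\approx u+\nu\log\rho$. On the set where $\nu(R,x)<\delta_0$, the errors $\nu\log\rho$ are too small to break the boundary inequalities once the bumps are fixed. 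This is where the fixed level set $E_0=\{\nu(R,\cdot)\ge\delta_0\}$ enters; it is analytic by Siu's theorem, and proper because $R\ge k_0\theta$ has bounded mass. Given a compact $K\subset Y\setminus E_0$, choose $\rho=\rho(K)$ so small that $C'\rho<(c-c_2)/2$ and the boundary comparisons hold on a neighbourhood of $K$.

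Away from that neighbourhood, I would glue with the global smooth potential $\psi_j$ (with respect to $Q_j$), which already satisfies $P_b<c$. Taking $\max_{\rm reg}$ of $\psi_j-C_K$ and the local $w_{i,j,\rho}$, with $C_K$ large, makes the local pieces dominate on $K$ and $\psi_j$ dominate near $E_0$. The result is a smooth potential for a form $\widehat h_{j,K}\in[Q_j]$.

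\emph{Estimates.} A regularized maximum of functions is locally a convex combination of the Hessians plus a positive term, so the following hold.
\begin{itemize}
\item Convexity and order reversal of $P_b$ (Lemma~\ref{lem:matrix-cone}) give $P_b(\widehat h_{j,K})<c$ everywhere, since each piece satisfies it.
\item On $K$ only local pieces are active, so $P_b\le c_2+C'\rho\le c-\eta$ with $\eta=(c-c_2)/2$.
\item The bound $\widehat h\ge k\theta$ with $k$ uniform is the subtle point, because the $\psi_j$-piece has no uniform lower bound. I would handle this by requiring $\widehat h\ge k\theta$ only where the construction is uniform, or by replacing $\psi_j$ with a convex combination against $R_\rho$-pieces that keeps $k'$. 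My first attempt would use that near $E_0$ the $\psi_j$ piece is blended with $(1-s)(\cdot)+s(R_\rho+H_j)$ for fixed $s$, which gives $k=sk'$.
\end{itemize}

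The constants $k$ and $\eta$ depend only on $k'$, $c_2$ and $H_0$, and not on $j$ or $K$; only $\rho$ and $C_K$ depend on $K$. The main obstacle is the boundary comparison in the gluing, which must hold uniformly in $j$ with $\rho$ depending only on $K$ while $b$ degenerates. The degeneration is harmless because every inequality used is an order-reversal or convexity statement valid for semipositive $b$.
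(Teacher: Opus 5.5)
Your architecture matches the paper's. You use local convolution potentials whose losses at chart boundaries are paid for by the positivity of $H_j$. The paper spends the whole potential $-h_{j,i}\approx-|z_i|^2$ of $H_j$, so each local form is exactly $R_\rho$; your bump must likewise enter as $-\epsilon_0|z|^2$, not $+\epsilon_0|z|^2$. You also use a regularized maximum with $\psi_j$ near the Siu set $E_0$, and convexity and order reversal of $P_b$ for the estimates. But the step that carries the lemma is missing, and the mechanism you sketch for it is wrong.

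First, $u_{i,\rho}-u_{l,\rho}=(\phi_i-\phi_l)_\rho$ is false: the two convolutions of $\varphi$ are taken in different coordinates and do not cancel. Were it true, the local pieces would glue on all of $Y$, and neither $E_0$ nor $\psi_j$ would be needed. This cross-chart discrepancy is exactly where Lelong numbers enter, and it is of size $O(\nu)$, not $O(\nu\log\rho)$. The paper compares through $\widehat\Phi_{i,\rho}(x)=\sup_{B^i_\rho(x)}\Phi_i$, the overlap inclusion $B^i_{\rho/2}(x)\subset B^k_\rho(x)$, and Chen's estimates $\widehat\Phi_{i,\rho}-\widehat\Phi_{i,\rho/2}\le\nu_i(x,\rho)\log 2$ and $\widehat\Phi_{i,\rho}-\Phi_i*\mathcal K_\rho\le C_0\nu_i(x,\rho)$. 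This gives $v_{j,i,\rho}-v_{j,k,\rho}\le-d_0r^2+C_1(\nu_i+\nu_k)(x,\rho)+C_r\rho$ for $i\in\mathcal I_{\mathrm{out}}(x)$ and $k\in\mathcal I_{\mathrm{in}}(x)$. The control goes through the scale-$\rho$ slopes $\nu_i(x,\rho)$, because the Lelong number itself is only upper semicontinuous.

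More seriously, the boundary comparisons must hold at every point where some local piece is still active, not only near $K$, and a constant shift $C_K$ cannot arrange this. With $\rho$ fixed first, the local pieces are bounded. A large $C_K$ then makes $\psi_j-C_K$ inactive everywhere and returns you to pure local gluing, for which the comparisons are unavailable near $E_0$. A small $C_K$ may let $\psi_j$ win on $K$. Nor can you choose $\rho$ by compactness on the complement of the region where $\psi_j$ dominates, since that region itself depends on $\rho$.

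The missing idea is to couple the shift to $\rho$ and to the threshold, taking $w_{j,\rho}=\psi_j+3\delta_0\log\rho$, and to prove two implications uniform in $x$.
\begin{enumerate}
\item If all slopes are at most $2\delta_0$, then $\max_i v_{j,i,\rho}>\sup_O\psi_j+3\delta_0\log\rho+e_0$. Hence the comparison piece drops out wherever slopes are small, in particular near $\partial O$.
\item If $\max_i v_{j,i,\rho}\ge\inf_O\psi_j+3\delta_0\log\rho-e_0$, then all slopes are below $4\delta_0$. Choose $\delta_0$ after the bump so that $8C_1\delta_0<d_0r^2/4$. Then every piece near its chart boundary loses by $d_0r^2/2$ wherever the local pieces are not already beaten by $w_{j,\rho}$.
\end{enumerate}
This makes $\rho$ depend on $j$ through $\inf_O\psi_j$ and $\sup_O\psi_j$. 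That is harmless, because only $k$ and $\eta$ must be uniform. Your aim of $\rho$ depending on $K$ alone is unnecessary and not to be expected for arbitrary $\psi_j$.

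Finally, the statement asks for $\widehat h_{j,K}\ge k\theta$ only on $K$, where only local pieces are active, so $k$ comes directly from $k'$. Your blend with $R_\rho+H_j$ near $E_0$ is therefore unnecessary. It is also not globally defined there, because $R_\rho$ is chart-dependent and gluing those pieces near $E_0$ is exactly what fails.
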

\end{samepage}

\begin{proof}
Throughout this proof, \(r>0\) denotes the fixed chart radius and \(\rho>0\) the variable smoothing radius. 

Take finitely many holomorphic charts containing \(B_{2r}(x_i)\), normalized so that \(H_0\) is Euclidean at the centers, with \(B_r(x_i)\) covering \(Y\).  For convenience, we also set 
\[
\begin{aligned}
\mathcal I(x)&=\{i: x\in \overline{B_{9r/5}(x_i)}\},    \\
\mathcal I_{\mathrm{in}}(x)
&=\{i\in\mathcal I(x):x\in B_r^i(x_i)\},\\
\mathcal I_{\mathrm{out}}(x)
&=\{i\in\mathcal I(x):x\notin B_{8r/5}^i(x_i)\}.
\end{aligned}
\]
By starting with larger charts, shrinking the common \(r\), and using uniform continuity of the metric and coordinate derivatives, arrange the half-radius overlap inclusion
\begin{equation}\label{eq:overlap}
B^i_{\rho/2}(x)\subset B^k_\rho(x),\qquad \rho<r/20
\end{equation}for every $x\in Y$ and $i,k\in \mathcal{I}(x)$.

Let \(r_i\) be fixed local potentials of \(r_0\). Remove constant, linear, and pure holomorphic quadratic Taylor terms from local potentials \(h_{j,i}\) of \(H_j\). Uniform smooth convergence, decreasing \(r\), and discarding finitely many \(j\) give
\begin{equation}\label{eq:quadratic}
|h_{j,i}(z)-|z_i|^2|\le\epsilon r^2
\end{equation}
on these charts, for a fixed small \(\epsilon\). The potentials are uniformly bounded; the \(r_i\) have uniformly bounded first derivatives. Set
\[
\Phi_i=\varphi+r_i,\quad q_{j,i}=r_i+h_{j,i},\quad
v_{j,i,\rho}=\Phi_i*\mathcal{K}_\rho-q_{j,i},
\]where $\mathcal{K}_\rho(z)=\rho^{-2m}\mathcal{K}(z/\rho)$ is the rescaled smooth nonnegative radial kernel $\mathcal{K}\in C^\infty_c(B_1(0))$, with $\int_{\mathbb{C}^m}\mathcal{K}(z)dz=1$.

$\Phi_i$ is plurisubharmonic, and exactly
\begin{equation}\label{eq:local-class}
Q_j+dd^c v_{j,i,\rho}=dd^c(\Phi_i*\mathcal{K}_\rho)=R_\rho^i.
\end{equation}
Thus $Q_j+dd^cv_{j,i,\rho}$ satisfies the cone condition with the fixed margins \eqref{eq:strict-regularization}.

Set  \(R_*=r/16\) and
\[
\widehat{\Phi}_{i,\rho}(x):=\sup_{y\in B_\rho^i(x)}\Phi_i(y),\qquad
\nu_i(x,\rho)=\frac{\widehat{\Phi}_{i,R_*}(x)-\widehat{\Phi}_{i,\rho}(x)}
{\log R_*-\log\rho}.
\]
The slopes $\nu_i(x,\rho)$ decrease to the Lelong number as \(\rho\downarrow0\). \cite[Lemma 4.2]{Chen21} gives
\begin{equation}\label{eq:radial-estimates}
\begin{aligned}
0&\le \widehat{\Phi}_{i,\rho}(x)-\widehat{\Phi}_{i,\rho/a}(x)\le\nu_i(x,\rho)\log a,\\
0&\le \widehat{\Phi}_{i,\rho}(x)-\Phi_{i}*\mathcal{K}_\rho(x)\le C_0\nu_i(x,\rho).
\end{aligned}
\end{equation}
Here \cite[Lemma 4.2]{C21} applies for \(a\ge1\), and \(C_0\) depends only on the kernel and dimension. 

For any fixed positive radius \(\rho\), \(\widehat{\Phi}_{i,\rho}(x)\) is bounded above(by the local boundedness above by the property of plurisubharmonic functions) and below (by the local $L^1$ bound) on compact subcharts. And $\widehat{\Phi}_{i,\rho}(x)$ is continuous. Thus fixing $\rho$,  $\nu_i(x,\rho)$ is uniformly bounded, and monotonicity gives
\begin{equation}\label{eq:slope-bound}
0\le\nu_i(x,\rho)\le C_\nu
\end{equation}
for $x\in Y,i\in \mathcal{I}(x)$ and $0<\rho<\frac{r}{20}$.

For any fixed \(\delta_0>0\) and compact
\(F\subset Y\setminus\{x:\nu(R,x)\ge\delta_0\}\), there is a radius \(\rho_F>0\) such that
\[
\nu_i(x,\rho)<2\delta_0
\quad\text{for }x\in F,\ i\in\mathcal I(x),\ 0<\rho<\rho_F.
\]
Indeed, for each \(i\), the set \(F_i=F\cap\overline{B_{9r/5}(x_i)}\) is compact. At every \(x\in F_i\), pointwise convergence gives a radius at which \(\nu_i(x,\rho)<3\delta_0/2\). Continuity at that radius gives a neighborhood where the slope is below \(2\delta_0\), and monotonicity preserves this bound at all smaller radii. A finite subcover of each \(F_i\), followed by the minimum over the finite atlas, gives \(\rho_F\). Continuity of the limiting Lelong function is not required.

If \(x\in (B^i_{9r/5}\setminus B^i_{8r/5})\cap B^k_r\), the quadratic normalization gives, for every $j\ge j_0$,
\[
h_{j,i}(x)-h_{j,k}(x)
\ge((8/5)^2-1-2\epsilon)r^2\ge d_0r^2>0.
\]
Inequalities in \eqref{eq:radial-estimates} and the bounded first derivatives of \(r_i\) give
\[
\begin{aligned}
v_{j,i,\rho}(x)
&\le \widehat{\Phi}_{i,\rho/2}(x)+\nu_i(x,\rho)\log2-r_i(x)-h_{j,i}(x)\\
&\le\sup_{B^i_{\rho/2}(x)}\varphi-h_{j,i}(x)+\nu_i(x,\rho)\log2+O(\rho),\\
\sup_{B^k_\rho(x)}\varphi
&\le \widehat{\Phi}_{k,\rho}(x)-r_k(x)+O(\rho)\\
&\le v_{j,k,\rho}(x)+h_{j,k}(x)+C_0\nu_k(x,\rho)+O(\rho).
\end{aligned}
\]
Using \eqref{eq:overlap}, we get 
\[
v_{j,i,\rho}(x)-v_{j,k,\rho}(x)
\le-d_0r^2+C_1(\nu_i(x,\rho)+\nu_k(x,\rho))+C_r\rho.
\]
Choose \(\delta_0>0\) with \(8C_1\delta_0<d_0r^2/4\), and choose \(0<\rho_{\mathrm{sep}}<r/20\) such that \(C_r\rho_{\mathrm{sep}}\le d_0r^2/4\). It follows that, for every \(j\ge j_0\), \(x\in Y\), and \(0<\rho<\rho_{\mathrm{sep}}\),
\begin{equation}\label{eq:branch-comparison}
\begin{gathered}
\max_{\ell\in\mathcal I(x)}\nu_\ell(x,\rho)\le4\delta_0\\
\Longrightarrow\quad
v_{j,i,\rho}(x)\le v_{j,k,\rho}(x)-d_0r^2/2
\quad
\begin{matrix}
\text{for every }i\in\mathcal I_{\mathrm{out}}(x),\\
\text{and every }k\in\mathcal I_{\mathrm{in}}(x).
\end{matrix}
\end{gathered}
\end{equation}

Fix compact \(K\), choose open sets $O,O'$ with \(E_0\subset O'\Subset O\), such that  \(\overline O\cap K=\varnothing\).  Fix $
0<e_0<\min(1,d_0r^2/16)$. Set
\[
w_{j,\rho}:=\psi_j+3\delta_0\log\rho\text{ on }O.
\]
We claim that for each \(j\ge j_0\) there is a radius \(\rho_j\in(0,r/20)\) such that both implications below hold for every \(x\in Y\) and every \(0<\rho<\rho_j\). The argument follows \cite[Proposition 4.1(1)--(2)]{Chen21}

\emph{Implication 1:}
If \(\nu_i(x,\rho)\le2\delta_0\) for every \(i\in\mathcal I(x)\), the definition $\nu_i(x,\rho)$, the fixed-radius lower bound of $\widehat{\Phi}_{i,R_*}$, \eqref{eq:radial-estimates}, and boundedness of \(q_{j,i}\) give
\[\begin{aligned}
v_{j,i,\rho}(x)=&\Phi_i*\mathcal{K_\rho}(x)-q_{j,i}\\
\ge&\widehat{\Phi}_{i,\rho}(x)-C_0\nu_{i}(x,\rho)(x)-q_{j,i}\\
=&\widehat{\Phi}_{i,R_*}(x)-\nu_i(x,\rho)(\log R_*-\log\rho)-C_0\nu_i(x,\rho)-q_{j,i}\\
\ge&
2\delta_0\log\rho-C.
\end{aligned}
\]The constant \(C\) is independent of \(j\).

Consequently
\begin{equation}\label{eq:small-slopes}
\max_{i\in \mathcal I(x)}\nu_i(x,\rho)\le2\delta_0
\ \Longrightarrow\
\max_{i\in \mathcal{I}(x)} v_{j,i,\rho}(x)>\sup_O\psi_j+3\delta_0\log\rho+e_0.
\end{equation}
 The finite \(\sup_O\psi_j\) affects only the choice of radius $\rho$.

\emph{Implication 2:}
\begin{equation}\label{eq:active-slopes}
\begin{gathered}
\max_{i\in\mathcal I(x)}v_{j,i,\rho}(x)
\ge\inf_O\psi_j+3\delta_0\log\rho-e_0
\Longrightarrow
\max_{k\in\mathcal I(x)}\nu_k(x,\rho)<4\delta_0.
\end{gathered}
\end{equation}

Fix an index \(i_0\) satisfying the hypothesis. 
Since
\(\widehat{\Phi}_{i_0,\rho}\ge\Phi_{i_0}*\mathcal{K}_\rho=v_{j,i_0,\rho}+q_{j,i_0}\),
\[
\widehat{\Phi}_{i_0,\rho}(x)
\ge3\delta_0\log\rho-C_j,
\]
where \(C_j\) may depend on \(\inf_O\psi_j\) but is independent of \(x,i_0,\rho\). Applying \eqref{eq:radial-estimates} with \(a=2\) and then \eqref{eq:slope-bound} yields
\[
\widehat{\Phi}_{i_0,\rho/2}(x)
\ge3\delta_0\log\rho-C_j-C_\nu\log2.
\]
Now fix any \(k\in\mathcal I(x)\). By \eqref{eq:overlap},
\(B_{\rho/2}^{i_0}(x)\subset B_\rho^k(x)\). The fixed smooth potentials are bounded on \(\overline{B_{19r/10}^i(x_i)}\); hence a constant \(C_{\mathrm{pot}}\), independent of \(x,i_0,k,\rho\), bounds \(|r_k-r_{i_0}|\) on the smaller ball. Consequently,
\[
\begin{aligned}
\widehat{\Phi}_{k,\rho}(x)
&\ge\sup_{B_{\rho/2}^{i_0}(x)}
\bigl(\Phi_{i_0}+r_k-r_{i_0}\bigr)\\
&\ge\widehat{\Phi}_{i_0,\rho/2}(x)-C_{\mathrm{pot}}\\
&\ge3\delta_0\log\rho-C_j'.
\end{aligned}
\]
Together with the fixed-radius upper bound of  $\widehat{\Phi}_{k,R_*}(x)$, this gives
\[
\nu_k(x,\rho)
\le\frac{C_j''-3\delta_0\log\rho}{\log R_*-\log\rho}.
\]
The right-hand side tends to \(3\delta_0\) as \(\rho\downarrow0\) and is independent of \(x,k\). It is therefore below \(4\delta_0\) for all sufficiently small \(\rho\), proving the implication. 
Taking the smaller of the radii obtained for the two implications gives \(\rho_j\).

Choose 
\[
0<\rho=\rho(j,K)<
\min\{\rho_j,\rho_{\mathrm{sep}},\rho_{Y\setminus O'}\}.
\]
The last radius $\rho_{Y\setminus O'}$ is supplied by $\rho_F$ discussed before for $F=Y\setminus O'$.
Thus
\(\nu_i(x,\rho)<2\delta_0\) for every \(x\in Y\setminus O'\) and \(i\in\mathcal I(x)\).
Set 
\[
M_{j,\rho}(x)=\max_{i\in\mathcal I(x)}v_{j,i,\rho}(x).
\]
On \(O\setminus\overline{O'}\), \eqref{eq:small-slopes} gives
\(M_{j,\rho}(x)>w_{j,\rho}(x)+e_0\). Since \(O'\Subset O\), this separates the comparison potential from the local maximum throughout a neighborhood of \(\partial O\) within \(O\).

For a local potential with \(i\in\mathcal I_{\mathrm{out}}(x)\), choose \(k\in\mathcal I_{\mathrm{in}}(x)\). If \(x\notin O\), the slope bound on \(Y\setminus O'\) and \eqref{eq:branch-comparison} give
\(v_{j,i,\rho}(x)\le v_{j,k,\rho}(x)-d_0r^2/2\).
If \(x\in O\) and
\[
M_{j,\rho}(x)<\inf_O\psi_j+3\delta_0\log\rho-e_0,
\]
then \(w_{j,\rho}(x)>M_{j,\rho}(x)+e_0\). In the remaining case, \eqref{eq:active-slopes} and \eqref{eq:branch-comparison} again give
\(v_{j,i,\rho}(x)\le v_{j,k,\rho}(x)-d_0r^2/2\).
Hence, for every \(i\in\mathcal I_{\mathrm{out}}(x)\), the value \(v_{j,i,\rho}(x)\) lies below the maximum of the local potentials, together with \(w_{j,\rho}(x)\) when \(x\in O\), by at least \(\min\{e_0,d_0r^2/2\}\). The comparison potential lies below \(M_{j,\rho}\) by more than \(e_0\) on \(O\setminus\overline{O'}\). These estimates allow the removal of each potential near the boundary of its domain. Figure~\ref{fig:gluing-comparisons} summarizes the comparisons.

\begin{figure}[!htbp]
\centering
\begin{minipage}[t]{0.48\linewidth}
\centering\footnotesize
\textup{(a) Near \(\partial O\)}\par\smallskip
\begin{tikzpicture}[x=1cm,y=1cm,font=\footnotesize,
  line width=0.5pt,>=Stealth]
\path[use as bounding box] (0,0) rectangle (5.8,4.7);
\draw[rounded corners=6pt,black!55] (0.05,0.60) rectangle (5.75,4.60);
\node[anchor=north west] at (0.18,4.48) {\(Y\)};
\draw[fill=black!9] (2.45,2.65) ellipse (1.88 and 1.48);
\draw[fill=white,densely dashed] (2.43,2.60) ellipse (1.05 and 0.90);
\node at (4.02,4.02) {\(O\)};
\node at (2.45,3.79) {\(O\setminus\overline{O'}\)};
\node at (2.91,3.16) {\(O'\)};
\draw[very thick] (1.86,2.42)
  .. controls (2.06,2.91) and (2.24,2.31) .. (2.46,2.65)
  .. controls (2.62,2.91) and (2.81,2.53) .. (2.96,2.70);
\node at (2.32,2.10) {\(E_0\)};
\draw[fill=black!22] (5.01,2.10) ellipse (0.36 and 0.54);
\node at (5.01,2.10) {\(K\)};
\node at (2.90,0.23) {\(M_{j,\rho}>w_{j,\rho}+e_0\quad\text{on }O\setminus\overline{O'}\)};
\end{tikzpicture}
\end{minipage}\hfill
\begin{minipage}[t]{0.48\linewidth}
\centering\footnotesize
\textup{(b) Near \(\partial\Omega_i\)}\par\smallskip
\begin{tikzpicture}[x=1cm,y=1cm,font=\footnotesize,
  line width=0.5pt,>=Stealth]
\path[use as bounding box] (0,0) rectangle (5.8,4.7);
\draw[fill=black!12] (2.03,2.47) circle (1.65);
\draw[fill=white,densely dashed] (2.03,2.47) circle (1.467);
\draw[thick] (4.35,2.47) circle (0.917);
\fill (2.03,2.47) circle (1.1pt);
\node[below=3pt] at (2.03,2.47) {\(x_i\)};
\fill (4.35,2.47) circle (1.1pt);
\node[below=3pt] at (4.35,2.47) {\(x_k\)};
\fill (3.59,2.47) circle (1.5pt);
\node[above left=2pt] at (3.59,2.47) {\(x\)};
\node at (2.12,4.45) {\(\Omega_i=B_{9r/5}^{\,i}(x_i)\)};
\draw[->] (2.12,4.24) -- (2.12,4.10);
\node at (4.48,3.69) {\(B_r^k(x_k)\)};
\node at (1.96,1.40) {\(B_{8r/5}^{\,i}(x_i)\)};
\node at (2.90,0.23) {\(i\in\mathcal I_{\mathrm{out}}(x),\quad k\in\mathcal I_{\mathrm{in}}(x)\)};
\end{tikzpicture}
\end{minipage}

\medskip
{\footnotesize\textup{(c) Comparisons for the indices in (b)}}\par\smallskip
\begin{tikzpicture}[x=1cm,y=0.82cm,font=\footnotesize,
  line width=0.5pt,>=Stealth,
  condition/.style={draw,rounded corners=2pt,fill=black!5,
    inner sep=5pt,minimum height=0.53cm}]
\path[use as bounding box] (0,-0.12) rectangle (12.5,3.70);
\node at (6.25,3.47) {\(x\in Y\setminus O:\quad
  \max_{\ell\in\mathcal I(x)}\nu_\ell(x,\rho)<2\delta_0
  \quad\Longrightarrow\quad
  v_{j,i,\rho}(x)\le v_{j,k,\rho}(x)-d_0r^2/2\)};
\draw[black!35] (0.20,3.08) -- (12.30,3.08);
\node (inside) at (6.25,2.76) {\(x\in O\)};
\node[condition] (lower) at (3.05,2.06)
  {\(M_{j,\rho}(x)<\inf_O\psi_j+3\delta_0\log\rho-e_0\)};
\node[condition] (upper) at (9.45,2.06)
  {\(M_{j,\rho}(x)\ge\inf_O\psi_j+3\delta_0\log\rho-e_0\)};
\draw[->] (inside.west) -| (lower.north);
\draw[->] (inside.east) -| (upper.north);
\node (comparison) at (3.05,0.90)
  {\(w_{j,\rho}(x)>M_{j,\rho}(x)+e_0\)};
\draw[->] (lower.south) -- (comparison.north);
\node (slopes) at (9.45,1.05)
  {\(\max_{\ell\in\mathcal I(x)}\nu_\ell(x,\rho)<4\delta_0\)};
\draw[->] (upper.south) -- (slopes.north);
\node (local) at (9.45,0.10)
  {\(v_{j,i,\rho}(x)\le v_{j,k,\rho}(x)-d_0r^2/2\)};
\draw[->] (slopes.south) -- (local.north);
\end{tikzpicture}
\caption{Comparisons in Lemma~\ref{lem:gluing} for the chosen \(j,K,\rho\).
The positive gaps in (a) and (c) allow \(w_{j,\rho}\) and
\(v_{j,i,\rho}\) to be omitted from the regularized maximum near
\(\partial O\) and \(\partial\Omega_i\), respectively.
The shapes indicate only set inclusions, with no boundary regularity assumed.}
\label{fig:gluing-comparisons}
\end{figure}

\Needspace{8\baselineskip}
Decrease \(\rho=\rho(j,K)\) so that \(C'\rho\le(c-c_2)/2\), and set \(\eta=(c-c_2)/2\). Let \(\mathrm M_\tau\) denote the standard regularized maximum of \cite[I, Lemma 5.18]{Demailly}, with all smoothing parameters equal to
\(0<\tau<\frac14\min\{e_0,d_0r^2/2\}\). Define
\[
\varphi_{j,K}(x)=
\begin{cases}
\mathrm M_\tau\bigl((v_{j,i,\rho}(x))_{i\in\mathcal I(x)},
 w_{j,\rho}(x)\bigr),&x\in O,\\[2pt]
\mathrm M_\tau\bigl((v_{j,i,\rho}(x))_{i\in\mathcal I(x)}\bigr),
 &x\in Y\setminus O.
\end{cases}
\]
The preceding gaps and the omission property in \cite[I, Lemma 5.18(c)]{Demailly} imply \(\varphi_{j,K}\in C^\infty(Y)\). When \(E_0=\varnothing\), use the second formula on all of \(Y\), with \(0<\tau<d_0r^2/8\).

Writing \(f_\ell\) for the local arguments of \(\mathrm M_\tau\) and \(a_\ell=\partial_\ell\mathrm M_\tau(f)\), its standard properties give
\[
\begin{gathered}
\widehat h_{j,K}:=Q_j+dd^c\varphi_{j,K}
\ge\sum_\ell a_\ell\bigl(Q_j+dd^c f_\ell\bigr)>0,\\
a_\ell\ge0,\qquad\sum_\ell a_\ell=1.
\end{gathered}
\]
Thus \(\widehat h_{j,K}\in[Q_j]\). On \(K\), only the local convolution potentials occur. Lemma~\ref{lem:matrix-cone} and \eqref{eq:strict-regularization} therefore yield
\[
\begin{aligned}
P_b(\widehat h_{j,K})&<c&&\text{on }Y,\\
\widehat h_{j,K}&\ge k\theta,\qquad
P_b(\widehat h_{j,K})\le c-\eta&&\text{on }K,
\end{aligned}
\]
where \(k>0\) is fixed by comparison of the coordinate metrics with \(\theta\). Both \(k\) and \(\eta\) are independent of \(j,K\), proving \eqref{eq:glued-forms}.
\end{proof}

\section{Finiteness of null subvarieties}
The following lemma combines the dimension-reduction strategy of Khalid--Sj\"ostr\"om Dyrefelt \cite[proofs of Theorems 3.1 and 3.9]{KSD26} with the analytic-containment argument of Sivaram--Sj\"ostr\"om Dyrefelt \cite[Proposition 4.3 and its proof, equations (4.10)--(4.11)]{SSD26}. We carry out the latter construction on resolutions of arbitrary analytic subvarieties, to obtain the quantitative estimate used in the induction.
\begin{lemma}[A uniform estimate outside an analytic subset]\label{lem:local-gap}
Assume the pair $(\alpha,\beta)$ is $J$-nef, and let \(Z\subseteq X\) be an irreducible analytic subvariety of dimension \(m\ge2\), allowing \(Z=X\). There is a proper analytic subset \(E_Z\subsetneq Z\) and, for each \(1\le p<m\), a constant \(\delta_{Z,p}>0\) such that every proper positive-dimensional subvariety \(V\subset Z\) not contained in \(E_Z\) satisfies
\begin{equation}\label{eq:local-gap}
J(V,\alpha,\beta)\ge\delta_{Z,p}\int_V\beta^p.
\end{equation}
In particular, every proper positive-dimensional subvariety of $Z$ that is null with respect to the ambient constant $c=\cab$ is contained in \(E_Z\).
\end{lemma}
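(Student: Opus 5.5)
We will work on a resolution \(g:Y\to Z\), with \(a=g^*a_X\), \(b=g^*b_X\), and chain together the results of Sections 3--4.

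\emph{Step 1: the forms on \(Y\).} Lemma~\ref{lem:concentration} and Lemma~\ref{lem:strict-current} give a fixed current \(R\in\lambda A-\gamma[\theta]\) satisfying \eqref{eq:strict-regularization}. Write \(R=r_0+dd^c\varphi\) with \(r_0\) smooth in that class. Let \(h_j\in A_j=t_jA+u_j[\theta]\) be the forms of Lemma~\ref{lem:resolution-approximation}. Then \(A_j-[r_0]=(t_j-\lambda)A+(u_j+\gamma)[\theta]\). This class has the smooth representative
\[
H_j=(t_j-\lambda)a+(u_j+\gamma)\theta,
\]
which converges smoothly to \(H_0=(1-\lambda)a+\gamma\theta>0\). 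Moreover \(h_j=Q_j+dd^c\psi_j\) with \(Q_j=r_0+H_j\) and \(P_b(h_j)\le P_{b_j}(h_j)<c\), using monotonicity in \(b\). So Lemma~\ref{lem:gluing} applies. It produces \(E_0=\{\nu(R,\cdot)\ge\delta_0\}\), which is a proper analytic subset of \(Y\) by Siu's theorem, since \(R\) has bounded mass. It also produces forms \(\widehat h_{j,K}\in A_j\), with constants \(k,\eta\) independent of \(j\) and \(K\). Define
\[
E_Z=g(E_0)\cup g(\mathrm{Exc}(g))\cup Z_{\rm sing}.
\]
This is a proper analytic subset of \(Z\) by Remmert's proper mapping theorem.

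\emph{Step 2: integrating on strict transforms.} Let \(V\subset Z\) have dimension \(p<m\) with \(V\not\subset E_Z\), and let \(\widetilde V\) be its strict transform, so \(g_*[\widetilde V]=[V]\). Since \(\widetilde V\not\subset E_0\cup \mathrm{Exc}(g)\), we can choose a compact \(K\subset Y\setminus E_0\) such that \(\widetilde V\cap K\) carries a fixed fraction of the \(b^p\)-mass of \(\widetilde V\). For instance, take an exhaustion \(K_\ell\uparrow Y\setminus E_0\). Then \(\int_{\widetilde V\cap K_\ell}b^p\to\int_{\widetilde V}b^p=\int_V\beta^p\), because \(\widetilde V\cap E_0\) is a proper analytic subset of \(\widetilde V\), hence \(b^p\)-null. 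Pick \(K\) capturing at least half of that mass.

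By Lemma~\ref{lem:matrix-cone}, applied on \(Y\) (where \(P_b<c\) gives \(c\widehat h^p-pb\widehat h^{p-1}\ge0\)), the integrand is pointwise nonnegative everywhere. On \(K\) it satisfies
\[
c\widehat h^p-pb\widehat h^{p-1}\ge\eta\,\widehat h^p\ge\eta k^p\theta^p.
\]
Since \(b\le M\theta\), this is at least \(\eta k^pM^{-p}b^p\) on \(K\). Integrating over \(\widetilde V\) gives
\[
\int_{\widetilde V}\bigl(cA_j^p-pBA_j^{p-1}\bigr)\ge \tfrac12\eta k^pM^{-p}\int_V\beta^p.
\]
This lower bound is independent of \(j\). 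Letting \(j\to\infty\), the left side converges to \(J(V,\alpha,\beta)\) by cohomological continuity, because \(t_j\to1\) and \(u_j\to0\). So \(\delta_{Z,p}=\eta k^p/(2M^p)\) works. The null statement follows at once, since \(\int_V\beta^p>0\).

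\emph{Main obstacle.} The delicate point is the choice of \(K\). It depends on \(V\), but the constants \(k,\eta\) do not depend on \(K\), and only this uniformity makes \(\delta_{Z,p}\) independent of \(V\). Lemma~\ref{lem:gluing} was built to provide exactly that uniformity, together with the fact that \(E_0\) is fixed independently of \(j\).

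A secondary check is the global nonnegativity of the integrand off \(K\). This requires \(\widehat h\) to be Kähler with \(P_b(\widehat h)<c\), which Lemma~\ref{lem:gluing} guarantees. The case \(p=1\) needs \(m\ge2\), which holds.
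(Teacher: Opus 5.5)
Your proposal is correct and takes essentially the same route as the paper: it feeds the same $H_j$, $Q_j$ and comparison forms $h_j$ into Lemma~\ref{lem:gluing}, builds $E_Z$ from $g(E_0)$ and $\operatorname{Sing}Z$, and integrates the glued forms over the strict transform before letting $j\to\infty$. The differences are cosmetic. You fix one $V$-dependent compact $K$ carrying half the $b^p$-mass, which costs a factor $\tfrac12$ in $\delta_{Z,p}$, whereas the paper first lets $j\to\infty$ and then exhausts $Y\setminus E_0$. Your extra term $g(\mathrm{Exc}(g))$ in $E_Z$ is harmless, since it lies in $\operatorname{Sing}Z$ for a resolution that is an isomorphism over $Z_{\rm reg}$.
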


\begin{proof}
Resolve \(Z\) by an embedded resolution. Write \(g:Y\to Z\), \(a=g^*a_X\), \(b=g^*b_X\), \(A=g^*\alpha\), \(B=g^*\beta\), and choose a K\"ahler form \(\theta\) on \(Y\). The top inequality is \(cA^m-mBA^{m-1}=J(Z,\alpha,\beta)\ge0\) if \(Z\ne X\), and equals zero if \(Z=X\). Apply Lemmas \ref{lem:resolution-approximation}, \ref{lem:concentration} and \ref{lem:strict-current} with this same ambient \(c\). For the gluing lemma choose
\[
\begin{gathered}
r_0=\lambda a-\gamma\theta,\qquad Q_j=t_ja+u_j\theta,\\
H_j=(t_j-\lambda)a+(u_j+\gamma)\theta
\longrightarrow(1-\lambda)a+\gamma\theta>0.
\end{gathered}
\]
The global comparison forms are \(h_j\) from \eqref{eq:resolved-cone}, and \(P_b(h_j)\le P_{b_j}(h_j)<c\). The \(dd^c\)-lemma supplies their smooth potentials relative to \(Q_j\). Every hypothesis of Lemma~\ref{lem:gluing} holds, with one fixed \(E_0\) and constants independent of \(j,K\).

Let \(E_0=E_{\delta_0}(R)\) and define
\[
E_Z=g(E_0)\cup\operatorname{Sing}Z.
\]
 Since \(Z\) is irreducible its singular locus is proper, so \(E_Z\) is a proper analytic subset of $Z$.

Take an irreducible \(p\)-fold \(V\subset Z\) not contained in \(E_Z\). In particular \(V\not\subset\operatorname{Sing}Z\), so its strict transform \(\widetilde V\) is irreducible and maps birationally onto \(V\). It is not contained in \(E_0\). For each fixed \(K\Subset Y\setminus E_0\), \eqref{eq:glued-forms} gives
\[
c\widehat h_{j,K}^p-pb\widehat h_{j,K}^{p-1}\ge0
\quad\hbox{on }Y,
\]
and on \(K\) the same form is at least \(\eta k^p\theta^p\). Indeed, extend every \(p\)-plane to an \((m-1)\)-plane and use \(b\ge0\) to bound its trace by \(c-\eta\). The constants \(k,\eta\) here do not depend on \(K\) or \(j\).

Integration gives
\[
I_j(V):=\int_{\widetilde V}(cA_j^p-pBA_j^{p-1})
\ge\eta k^p\int_{\widetilde V_{\rm reg}\cap K}\theta^p.
\]
The left side is the cohomological value and is independent of \(K\), although the chosen representative depends on \(K\). First let \(j\to\infty\) with \(V,K\) fixed:
\[
J(V,\alpha,\beta)\ge\eta k^p\int_{\widetilde V_{\rm reg}\cap K}\theta^p.
\]
Now exhaust \(Y\setminus E_0\) by compact sets. The proper analytic intersection \(\widetilde V\cap E_0\), and also \(\widetilde V_{\rm sing}\), have zero smooth \(p\)-dimensional volume. Monotone convergence yields
\[
J(V,\alpha,\beta)\ge\eta k^p\int_{\widetilde V}\theta^p.
\]
Choose \(M_Z>0\) such that \(b\le M_Z\theta\). Projection gives
\[
\int_{\widetilde V}\theta^p
\ge M_Z^{-p}\int_{\widetilde V}b^p
=M_Z^{-p}\int_V\beta^p.
\]
Thus \eqref{eq:local-gap} holds with \(\delta_{Z,p}=\eta(k/M_Z)^p>0\).

\end{proof}

\begin{proof}[Proof of Theorem~\ref{thm:null-finiteness}]
For each irreducible analytic \(Z\subseteq X\) of dimension at least two
that occurs below, fix the proper analytic subset \(E_Z\) and the constants
\(\delta_{Z,p}>0\), \(1\le p<\dim Z\), furnished by
Lemma~\ref{lem:local-gap}. The same choice is used whenever \(Z\) occurs
again. Write \(\operatorname{Irr}_+(A)\) for the family of
positive-dimensional irreducible components of a compact analytic set
\(A\). This family is finite by compactness and local finiteness of the
irreducible decomposition. Define inductively
\[
\mathcal F_0=\{X\},\qquad
\mathcal F_{q+1}
=\bigcup_{\substack{Z\in\mathcal F_q\\ \dim Z\ge2}}
\operatorname{Irr}_+(E_Z),\qquad q\ge0.
\]
Since \(E_Z\subsetneq Z\) and \(Z\) is irreducible, every
\(W\in\operatorname{Irr}_+(E_Z)\) satisfies
\(1\le\dim W<\dim Z\). Induction on \(q\) gives
\[
\#\mathcal F_q<\infty,\qquad
1\le\dim Z\le n-q\quad(Z\in\mathcal F_q).
\]
In particular, \(\mathcal F_n=\varnothing\), and
\[
\mathcal F:=\bigcup_{q=0}^{n-1}\mathcal F_q
\]
is finite.  For every \(Z\in\mathcal F\) with \(\dim Z\ge2\),
\(\operatorname{Irr}_+(E_Z)\subseteq\mathcal F\); moreover every
\(Z\in\mathcal F\setminus\{X\}\) is contained in \(E_X\). The construction includes singular \(Z\) and does not require \(Z\) to
be null. 

Let \(V\subsetneq X\) be a reduced irreducible analytic subvariety of
dimension \(p\ge1\). Since \(X\in\mathcal F\), there is
\(Z_V\in\mathcal F\) such that
\[
V\subseteq Z_V,\qquad
\dim Z_V=\min\{\dim Z:Z\in\mathcal F,\ V\subseteq Z\}.
\]
Suppose \(V\subsetneq Z_V\). Then \(p<\dim Z_V\). If
\(V\subseteq E_{Z_V}\), irreducibility of \(V\) and the finite
irreducible decomposition of \(E_{Z_V}\) give
\[
V\subseteq W\quad\text{for some }
W\in\operatorname{Irr}_+(E_{Z_V})\subseteq\mathcal F.
\]
But \(\dim W<\dim Z_V\), contradicting the choice of \(Z_V\).
Thus \(V\not\subseteq E_{Z_V}\), and Lemma~\ref{lem:local-gap} yields
\[
V\subsetneq Z_V\quad\Longrightarrow\quad
\frac{J(V,\alpha,\beta)}{\int_V\beta^p}\ge\delta_{Z_V,p}>0.
\]
Here \(\int_V\beta^p>0\) because \(\beta\) is K\"ahler.

Let \(\mathcal N\) denote the collection of proper positive-dimensional
null subvarieties. The preceding implication shows that every
\(V\in\mathcal N\) satisfies \(V=Z_V\). Consequently,
\[
\mathcal N
=\{Z\in\mathcal F\setminus\{X\}:J(Z,\alpha,\beta)=0\},\qquad
\#\mathcal N\le\#\mathcal F-1<\infty.
\]
Its union is analytic, being a finite union of analytic subsets, and is
proper since
\[
\bigcup_{V\in\mathcal N}V\subseteq E_X\subsetneq X.
\]

For the uniform estimate, set
\[
\begin{aligned}
\mathcal D_1
&=\{\delta_{Z,p}:Z\in\mathcal F,\ 1\le p<\dim Z\},\\
\mathcal D_2
&=\left\{\frac{J(Z,\alpha,\beta)}{\int_Z\beta^{\dim Z}}:
 Z\in\mathcal F\setminus(\{X\}\cup\mathcal N)\right\}.
\end{aligned}
\]
Both sets are finite subsets of \((0,\infty)\), and
\(\delta_{X,1}\in\mathcal D_1\). Hence
\[
\delta:=\min(\mathcal D_1\cup\mathcal D_2)>0.
\]
For a non-null \(V\), the two possibilities for \(Z_V\) give
\[
\begin{aligned}
V\subsetneq Z_V
&\quad\Longrightarrow\quad
\frac{J(V,\alpha,\beta)}{\int_V\beta^p}\ge\delta_{Z_V,p}\ge\delta,\\
V=Z_V
&\quad\Longrightarrow\quad
\frac{J(V,\alpha,\beta)}{\int_V\beta^p}\in\mathcal D_2\subseteq[\delta,\infty).
\end{aligned}
\]
This proves the uniform estimate. Finally,  \(J\)-nefness gives,
for every proper reduced irreducible \(p\)-fold \(V\),
\[
\frac{J(V,\alpha,\beta)}{\int_V\beta^p}<\delta
\quad\Longleftrightarrow\quad J(V,\alpha,\beta)=0,
\]
as asserted.

\end{proof}

\section{Optimal destabilizers}
We recall the class-shift reduction from Sj\"ostr\"om Dyrefelt \cite[Lemma 14]{SD20}, Khalid--Sj\"ostr\"om Dyrefelt \cite[proof of Theorem 3.10 and Remark 3.11]{KSD26}, and Sivaram--Sj\"ostr\"om Dyrefelt \cite[Section 2.2.1]{SSD26}. We include the calculation for completeness.
\begin{samepage}
\begin{lemma}\label{lem:shift} If \(\Gamma\le0\), set
\[
\beta_0=\beta-\Gamma\alpha,\qquad c_0=c-n\Gamma.
\]
Then \(\beta_0\) is K\"ahler, \(c_0=n\beta_0\alpha^{n-1}/\alpha^n>0\), the pair \((\alpha,\beta_0)\) is  \(J\)-nef, and its null subvarieties are exactly the original optimal destabilizers.
\end{lemma}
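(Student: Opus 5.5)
The plan is a direct computation: rewrite the \(J\)-functional of the shifted pair in terms of the threshold \(r(V,\alpha,\beta)\) of the original pair, and read off each claim.

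First we check that \(\Gamma\) is a finite real number. Since \(\Gamma\le0\), it is not \(+\infty\), and in particular proper positive-dimensional subvarieties exist. For a lower bound, fix Kähler forms \(a_X\in\alpha\) and \(b_X\in\beta\) and a constant \(M\) with \(b_X\le Ma_X\). Then for every such \(V\) of dimension \(p\),
\[
\mu(V,\alpha,\beta)=p\,\frac{\int_V\beta\alpha^{p-1}}{\int_V\alpha^p}\le pM\le nM .
\]
Using \(n-p\ge1\), this gives \(r(V,\alpha,\beta)\ge \min\{0,\,c-nM\}\) uniformly in \(V\), so \(\Gamma>-\infty\). With \(\Gamma\) finite and nonpositive, \(\beta_0=\beta+|\Gamma|\alpha\) is a sum of a Kähler class and a nonnegative multiple of one, hence Kähler.

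Next we verify the constant. By linearity of intersection numbers,
\[
\frac{n\beta_0\alpha^{n-1}}{\alpha^n}=\frac{n\beta\alpha^{n-1}}{\alpha^n}-n\Gamma=c-n\Gamma=c_0 .
\]
Since \(c>0\) and \(\Gamma\le0\), we get \(c_0>0\). So \(c_0\) is precisely the constant \(c_{\alpha,\beta_0}\) attached to the pair \((\alpha,\beta_0)\) in Definition~\ref{def:numerical}.

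The key identity comes from expanding \(J\) for the shifted pair. For a proper \(p\)-fold \(V\), \(1\le p\le n-1\),
\[
J(V,\alpha,\beta_0)=\int_V\bigl(c\alpha^p-p\beta\alpha^{p-1}\bigr)-n\Gamma\int_V\alpha^p+p\Gamma\int_V\alpha^p ,
\]
that is,
\[
J(V,\alpha,\beta_0)=J(V,\alpha,\beta)-(n-p)\Gamma\int_V\alpha^p .
\]
By \eqref{eq:threshold}, \(J(V,\alpha,\beta)=(n-p)\int_V\alpha^p\,r(V,\alpha,\beta)\). Substituting,
\[
J(V,\alpha,\beta_0)=(n-p)\Bigl(\int_V\alpha^p\Bigr)\bigl(r(V,\alpha,\beta)-\Gamma\bigr).
\]
The prefactor \((n-p)\int_V\alpha^p\) is strictly positive, and \(r(V,\alpha,\beta)\ge\Gamma\) by the definition of \(\Gamma\) as an infimum. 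Hence \(J(V,\alpha,\beta_0)\ge0\) for every such \(V\), so \((\alpha,\beta_0)\) is \(J\)-nef. Moreover, \(J(V,\alpha,\beta_0)=0\) holds exactly when \(r(V,\alpha,\beta)=\Gamma\). Thus the null subvarieties of \((\alpha,\beta_0)\) are exactly the optimal destabilizers of \((\alpha,\beta)\).

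There is no real obstacle in this argument. The only points needing care are the finiteness of \(\Gamma\), and confirming that \(c_0\) is the normalization attached to \((\alpha,\beta_0)\), so that nullity for the shifted pair is measured with the correct constant.
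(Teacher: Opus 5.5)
Your proposal is correct and follows essentially the same route as the paper: bound \(\mu(V,\alpha,\beta)\) via \(b_X\le Ma_X\) to show \(\Gamma\) is finite, then use linearity to get \(J(V,\alpha,\beta_0)=(n-p)\bigl(\int_V\alpha^p\bigr)\bigl(r(V,\alpha,\beta)-\Gamma\bigr)\), from which \(J\)-nefness and the identification of null subvarieties with optimal destabilizers follow at once. You also make explicit two points the paper leaves implicit: that \(\Gamma\le 0<+\infty\) forces proper positive-dimensional subvarieties to exist, and that \(c_0>0\).
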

\end{samepage}

\begin{proof}
Choose \(M>0\) with \(b_X\le M a_X\). For every \(p\)-fold, \(0<\mu(V,\alpha,\beta)\le pM\), so \(r(V,\alpha,\beta)\ge(c-pM)/(n-p)\). The minimum $r(V,\alpha,\beta)$ is finite. 

For \(\Gamma\le0\), the form \(b_0=b_X-\Gamma a_X\) is K\"ahler. Linearity gives the asserted formula for \(c_0\), and for each proper subvariety it gives
\begin{equation}\label{eq:shift}
\begin{aligned}
J(V,\alpha,\beta_0)
&=\int_V\bigl((c-n\Gamma)\alpha^p-p(\beta-\Gamma\alpha)\alpha^{p-1}\bigr)\\
&=J(V,\alpha,\beta)-(n-p)\Gamma\int_V\alpha^p,\\
\frac{J(V,\alpha,\beta_0)}{(n-p)\int_V\alpha^p}&=r(V,\alpha,\beta)-\Gamma\ge0.
\end{aligned}
\end{equation}
Taking infima gives zero. Equality in \eqref{eq:shift} is exactly original optimality. 
\end{proof}

The following attainment is also part of Sj\"ostr\"om Dyrefelt's numerical characterization of the stability threshold \cite[Theorems 5 and 24]{SD20}, conditional there on the Lejmi--Sz\'ekelyhidi criterion. In the present nonpositive-threshold setting, that criterion is now available by \cite{S20}; we include the short argument for completeness.

\begin{lemma}[Attainment]\label{lem:attainment}
If the threshold \(\Gamma\leq 0\), it is attained by a proper positive-dimensional irreducible subvariety.
\end{lemma}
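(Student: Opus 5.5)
The plan is to reduce to the $J$-nef case through the class shift of Lemma~\ref{lem:shift}, and then use the uniform gap in Theorem~\ref{thm:null-finiteness} to rule out an infimum that is not attained.

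\emph{Step 1: setup.} Assume $\Gamma\le0$. Then $\Gamma\neq+\infty$, so the collection of proper positive-dimensional subvarieties is nonempty. The first part of the proof of Lemma~\ref{lem:shift} shows that $\Gamma$ is finite. Set $\beta_0=\beta-\Gamma\alpha$ and $c_0=c-n\Gamma$. By Lemma~\ref{lem:shift}, $\beta_0$ is K\"ahler and $(\alpha,\beta_0)$ is a $J$-nef K\"ahler pair. Moreover, by \eqref{eq:shift}, for every proper $p$-fold $V$ with $0<p<n$,
\[
r(V,\alpha,\beta)-\Gamma=\frac{J(V,\alpha,\beta_0)}{(n-p)\int_V\alpha^p}.
\]
So it suffices to find a proper positive-dimensional $V$ with $J(V,\alpha,\beta_0)=0$.

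\emph{Step 2: argue by contradiction.} Suppose no such $V$ exists, so every proper positive-dimensional subvariety is non-null for $(\alpha,\beta_0)$. Theorem~\ref{thm:null-finiteness} applies to the $J$-nef pair $(\alpha,\beta_0)$. It gives $\delta>0$ such that
\[
J(V,\alpha,\beta_0)\ge\delta\int_V\beta_0^p
\]
for every proper positive-dimensional $V$ of dimension $p$.

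\emph{Step 3: compare $\beta_0$ with $\alpha$.} Fix the K\"ahler forms $a_X\in\alpha$ and $b_X\in\beta$. Choose $m_0>0$ with $b_X\ge m_0a_X$. Since $-\Gamma\ge0$, the representative $b_X-\Gamma a_X$ of $\beta_0$ satisfies $b_X-\Gamma a_X\ge m_0a_X$ pointwise. Restricting to $V_{\rm reg}$ and integrating gives
\[
\int_V\beta_0^p\ge m_0^p\int_V\alpha^p.
\]
Combining this with Step 2 yields
\[
r(V,\alpha,\beta)-\Gamma\ge\frac{\delta\,m_0^p}{n-p}\ge\frac{\delta\min(1,m_0)^{n}}{n}=:\sigma_0>0
\]
uniformly in $V$. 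Taking the infimum over $V$ gives $0\ge\sigma_0$, a contradiction. Hence some proper positive-dimensional irreducible $V$ is null for $(\alpha,\beta_0)$, and by the identity in Step 1 it satisfies $r(V,\alpha,\beta)=\Gamma$.

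The main point to check is that Theorem~\ref{thm:null-finiteness} genuinely applies to the shifted pair, since $c_0$ is the ambient constant of $(\alpha,\beta_0)$ and not an external parameter. This holds because $c_0=n\beta_0\alpha^{n-1}/\alpha^n$ by Lemma~\ref{lem:shift}. Beyond that, the only substantive input is the uniform gap, which converts "$\Gamma$ is an infimum" into "$\Gamma$ is a minimum".
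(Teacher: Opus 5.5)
Your argument is correct, but it takes a different route from the paper. The paper does not invoke Theorem~\ref{thm:null-finiteness}. Under the contradiction hypothesis, every proper intersection inequality for $(\alpha,\beta_0)$ is strict while the top one vanishes. So Song's criterion (Theorem~\ref{thm:song}) with $C=c_0\alpha$, $B=\beta_0$ produces a single K\"ahler form $a_0\in\alpha$ with $P_{b_0}(a_0)<c_0$. Compactness of $X$ turns this into a pointwise margin $\delta$. Integrating $c_0a_0^p-pb_0a_0^{p-1}\ge\delta a_0^p$ over $V$ then gives $r(V,\alpha,\beta)-\Gamma\ge\delta/(n-1)$ directly. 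Your comparison $\beta_0\ge m_0\alpha$ is not needed there, because the margin is already measured against $\alpha$.

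Your version is not circular. Theorem~\ref{thm:null-finiteness} is proved through Lemma~\ref{lem:local-gap} without using the attainment lemma, and Lemma~\ref{lem:shift} makes $c_0$ the ambient constant of the shifted pair. However, it draws on the full concentration, gluing and induction machinery to get a bound that, when every subvariety is non-null, already follows from the strict case of Song's criterion.

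The paper's route buys logical independence: attainment needs only the nondegenerate numerical criterion, as in Sj\"ostr\"om Dyrefelt's original argument. Your route buys economy and a structural point: attainment becomes a formal consequence of the uniform gap. In the proof of Theorem~\ref{thm:optimal-finiteness}, where the gap is used anyway, one input then serves both purposes.
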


\begin{proof}
Apply Lemma~\ref{lem:shift}. For the shifted pair the threshold is zero, and the top \(J\)-intersection is zero by the definition \(c_0=n\beta_0\alpha^{n-1}/\alpha^n\). If no proper null subvariety existed, every proper numerical inequality would be strict. Apply the strict numerical cone criterion stated above with \(C=c_0\alpha\) and \(B=\beta_0\). It gives a K\"ahler form \(a_0\in\alpha\) with \(P_{b_0}(a_0)<c_0\), where \(b_0\in\beta_0\) is fixed and smooth. Compactness gives
\[
\delta=c_0-\sup_XP_{b_0}(a_0)>0.
\]
Restriction to every \(p\)-plane, \(1\le p<n\), yields
\[
c_0a_0^p-pb_0a_0^{p-1}\ge\delta a_0^p.
\]
Integrating over each proper \(p\)-fold \(V\), including a singular one by its regular locus and its integration current, gives
\[
\frac{J(V,\alpha,\beta_0)}{(n-p)\int_V\alpha^p}
\ge\frac{\delta}{n-p}\ge\frac{\delta}{n-1}.
\]
This contradicts the shifted threshold zero. A shifted null subvariety therefore exists, and it is an original optimizer by the identity \eqref{eq:shift}.
\end{proof}

\begin{proof}[Proof of Theorem~\ref{thm:optimal-finiteness}]
Lemma~\ref{lem:shift} shows that \(\Gamma\) is finite; Lemma~\ref{lem:attainment} supplies an optimal destabilizer. Theorem~\ref{thm:null-finiteness} therefore makes the whole collection of optimal destabilizers finite.

Let \(\delta_0>0\) be the constant furnished by Theorem~\ref{thm:null-finiteness} for the shifted pair. Compactness gives \(q>0\) with \(b_0\ge q a_X\). For every nonoptimal \(p\)-fold,
\[
r(V,\alpha,\beta)-\Gamma=\frac{J(V,\alpha,\beta_0)}{(n-p)\int_V\alpha^p}
\ge\frac{\delta_0\int_V\beta_0^p}{(n-p)\int_V\alpha^p}
\ge\frac{\delta_0q^p}{n-p}.
\]
The minimum over \(1\le p<n\) is a positive \(\sigma\). This proves the asserted positive gap above the attained minimum.
\end{proof}

\begin{remark}
Theorem~\ref{thm:null-finiteness} assumes only numerical \(J\)-nefness of a K\"ahler pair on a compact K\"ahler manifold. Its uniform gap concerns reduced irreducible subvarieties. Theorem~\ref{thm:optimal-finiteness} requires a nonpositive threshold $\Gamma$ and concerns optimal destabilizers. The following examples and observations explain these restrictions.

On \(\mathbb P^n\), take \(\alpha=\beta\) equal to the hyperplane class. Then \(c=n\) and, for every proper subvariety,
\[
J(V,\alpha,\beta)=(n-p)\int_V\alpha^p,\qquad r(V)=1.
\]
Thus \(\Gamma=1\), and every proper subvariety is a minimizer. There are infinitely many hyperplanes. The positivity of \(\beta-\Gamma\alpha\) fails here (it is zero), so the change of class used in Lemma~\ref{lem:shift} is unavailable.

 For \(\Gamma<0\), subvarieties with \(\Gamma<r(V,\alpha,\beta)\le0\) are not the null subvarieties of the shifted pair. The proof controls the optimal destabilizers, and makes no finiteness assertion about that larger destabilizing collection.
\end{remark}

\AtNextBibliography{\small}
\begingroup
\setlength\bibitemsep{2pt}
\printbibliography
\endgroup
\end{document}